\documentclass[twoside,11pt,reqno]{amsart}

\usepackage[bookmarks,
bookmarksnumbered,%
 colorlinks=true,%
 linkcolor=red,%
 citecolor=blue,%
 filecolor=blue,%
 urlcolor=blue,%
]
{hyperref}
\usepackage{amssymb, amscd,fge}
\usepackage{xypic}
\usepackage{epigraph}

\pagestyle{headings}


\DeclareMathOperator{\hilb}{Hilb}

\DeclareMathOperator{\Pic}{Pic}
\DeclareMathOperator{\rank}{rank}

\DeclareMathOperator{\im}{Im}



\def\dim{\mbox{dim}}
\def\ra{\rightarrow}
\def\cal{\mathcal} 

\def\ol{\overline}

\def\CC{\mathbb{C}}
\def\PP{\mathbb{P}}

\def\ZZ{\mathbb{Z}}
\def\NN{\mathbb{N}}  
\def\RR{\mathbb{R}}

\def\HH{\mathbb{H}}

\def\FF{\cal F}

\def\OO{\cal O}

\def\GG{\cal G}

\def\s-{\setminus}


\newtheorem{main}{Theorem}

\newtheorem{thm}{Theorem}[section]
\newtheorem{prop}[thm]{Proposition}

\newtheorem{cor}[thm]{Corollary}
\newtheorem{lem}[thm]{Lemma}

\newtheorem{rmk}[thm]{Remark}

\numberwithin{equation}{section}


\def\ps{\vspace{4pt}}

\setcounter{tocdepth}{1}

\begin{document}

\title{Counting real rational curves on $K3$ surfaces}

\author[Kharlamov]{Viatcheslav Kharlamov}

\address{
        IRMA UMR 7501, Universit\'e de Strasbourg, 7 rue Ren\'e-Descartes, 67084 Strasbourg Cedex,  FRANCE}

\email{kharlam@math.unistra.fr}

\author[R\u asdeaconu]{Rare\c s R\u asdeaconu}

\address{        
        Department of Mathematics, 1326 Stevenson Center, Vanderbilt University, Nashville, TN, 37240, USA}

\email{rares.rasdeaconu@vanderbilt.edu}

\keywords{$K3$ surfaces, compactified Picard variety, rational curves, Yau-Zaslow formula, Welschinger invariants}

\subjclass[2000]{Primary: 14N99; Secondary: 14P99, 14J28}

\date{}

\begin{abstract}
We provide a real analog of the Yau-Zaslow formula counting rational curves on $K3$ surfaces.
\end{abstract}

\maketitle

\thispagestyle{empty}

\epigraph{\scriptsize "But man is a fickle and disreputable creature and perhaps, like a chess-player, 
is interested in the process of attaining his goal rather than the goal itself."}
{\it Fyodor Dostoyevsky, Notes from the Underground.}

\section*{Introduction}
\label{sec.intro}

We suggest, as a real analog of the Yau-Zaslow formula \cite{yau-zaslow}, 
a certain signed count of real rational curves in a primitive 
divisor class on a generic real $K3$ surface. Our count is invariant 
under deformations of the surface and yields a non-trivial lower 
bound for the number of real rational curves in question. The goal 
is achieved by following Beauville's approach \cite{beauville} 
towards the original Yau-Zaslow  prediction  for the number of complex 
rational curves on generic $K3$ surfaces. In the real setting, 
some of the steps lead to results that may be of independent interest.

We prove, first, that for a real curve of positive geometric genus, the Euler 
characteristic of each of its compactified Picard varieties vanishes, while
for a real rational nodal curve it is not zero and equal to $(-1)^gw$, where 
$g$ is the genus and $w$ the Welschinger number \cite{welschinger}
of the curve. The latter 
is equal to $(-1)^s$, where $s$ is the number of zero-dimensional components
in the real locus of the curve. 
Counting the real curves with Welschinger signs we arrive to
the following statement relating such a signed count with the Euler 
characteristic of  the real locus of a punctual Hilbert scheme.

\begin{main}
\label{main}
Let $X$ be a generic real $K3$ surface admitting a complete real 
$g$-dimensional linear system of curves of genus $g.$
If $g\ge 2,$ assume, in addition, that $X$ is of Picard rank $1$
and the curves in the linear system belong to a primitive divisor class.
Let $n_+$ (respectively, $n_-$) denote the number of real rational curves in
 this linear system that have
Welschinger sign plus (respectively, minus). Then, 
$$
n_+-n_-=(-1)^g e(X_\RR^{[g]}),\quad  n_++n_-\ge  \vert e(X_\RR^{[g]})\vert,
$$
where $X^{[g]}$ is the Hilbert scheme that parametrizes finite subschemes of length 
$g$ of $X$ as a complex surface,  $X_\RR^{[g]}$ is the real locus
of $X^{[g]},$ and $e$ states for the Euler characteristic.
\end{main}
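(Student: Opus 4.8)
The plan is to follow Beauville's strategy verbatim but at the level of real loci, with the Welschinger sign replacing the constant local contribution $1$ of the complex count. Over the real linear system $B=|C|\cong\PP^g$ I would form the universal curve $\pi\colon\cal C\to B$ and its relative compactified Picard variety $\ol{\cpic}\to B$, equipped with the real structure induced by that of $X$. Since $X$ is generic of Picard rank $1$ and the class is primitive, every member of $|C|$ is integral, so $\ol{\cpic}$ is a smooth holomorphic symplectic variety (a Beauville--Mukai system). The first, and decisive, step is to establish the real analog of Beauville's identification
$$e\big(\ol{\cpic}_\RR\big)=e\big(X_\RR^{[g]}\big),$$
using that $\ol{\cpic}$ and $X^{[g]}$ are deformation equivalent as holomorphic symplectic manifolds through a family that can be arranged compatibly with the real structures and keeping the topological type of the real locus constant.

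The second step computes $e(\ol{\cpic}_\RR)$ fiberwise over $B_\RR$, the real locus of $\PP^g$. Working with the compactly supported Euler characteristic $e_c$, which is additive and multiplicative in real semialgebraic families, I would stratify $B_\RR$ by the geometric genus of the corresponding curve so that $\ol{\cpic}_\RR\to B_\RR$ is locally trivial over each stratum $S$, and write
$$e\big(\ol{\cpic}_\RR\big)=\sum_{S}e_c(S)\,e_c\big(\ol{\cpic}_\RR(C_S)\big),$$
the sum running over the strata $S$ with representative curve $C_S$. By the results already proved in the paper, the fiber over any real curve of positive geometric genus is a compactified Picard variety of vanishing Euler characteristic; as these fibers are proper, their $e_c$ equals $e=0$, and all such strata drop out.

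The only surviving stratum is the finite set of real rational curves in $|C|$, each an isolated real point of $B_\RR$ with $e_c=1$; for generic $X$ these are exactly the geometrically rational members, having $g$ nodes. By the paper's computation the fiber over such a curve $C$ contributes $e\big(\ol{\cpic}_\RR(C)\big)=(-1)^g w(C)=(-1)^g(-1)^{s(C)}$, which is precisely $(-1)^g$ times the Welschinger sign of $C$. Summing over all real rational curves gives $e(\ol{\cpic}_\RR)=(-1)^g(n_+-n_-)$, and combined with the first step this yields $n_+-n_-=(-1)^g e(X_\RR^{[g]})$. The inequality follows at once from $n_\pm\ge 0$, since $n_++n_-\ge\vert n_+-n_-\vert=\vert e(X_\RR^{[g]})\vert$.

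I expect the main obstacle to be the first step. Over $\CC$ the equality of Euler characteristics is immediate from deformation equivalence, but the Euler characteristic of a real locus is \emph{not} a deformation invariant in general: real fibers can change topology across the real-codimension-one discriminant. The substance of the argument therefore lies in producing a real deformation of the Beauville--Mukai system onto $X^{[g]}$ (or a genuine real isomorphism) along which the real locus stays smooth of constant diffeomorphism type, so that $e(\ol{\cpic}_\RR)=e(X_\RR^{[g]})$ actually holds.
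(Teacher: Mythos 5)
Your second step is essentially the paper's own argument and is correct: it is Viro--Fubini integration of the Euler characteristic along $\ol{\Pic}^g_\RR{\cal C}\to |{\cal C}|_\RR$, where the fibers over real curves of positive geometric genus die by Proposition~\ref{nullity}, and each real rational curve (nodal, by Chen's theorem \cite{chen}, which is what the genericity hypothesis is for) contributes $e\bigl(\ol{\Pic}^g_\RR(C)\bigr)=(-1)^r=(-1)^g(-1)^s$ by Proposition~\ref{Euler-picard} and the parity relation $r+s+2t=g$; the inequality then follows from $n_\pm\ge 0$. (One technical remark: stratifying by geometric genus does not make the family locally trivial --- the numbers of solitary and cross points vary within a stratum --- but Viro's theorem only needs constructibility of the fiberwise Euler characteristic, so this is harmless.) The genuine gap is your first step, which you yourself flag as the main obstacle and then leave unproved: the identification $e\bigl(\ol{\Pic}^g_\RR{\cal C}\bigr)=e\bigl(X^{[g]}_\RR\bigr)$. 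Saying that the deformation equivalence ``can be arranged compatibly with the real structures'' is exactly the assertion that has to be established, and it is where the paper spends most of its effort (Proposition~\ref{PicardToHilbert}).

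Note also that your diagnosis of why this step is delicate is slightly misplaced. Deformations of compact hyperk\"ahler manifolds are smooth and proper, so along any \emph{equivariant} deformation the real locus cannot change diffeomorphism type (equivariant Ehresmann); there is no discriminant to cross. The true difficulty is that complex deformation equivalence of $\ol{\Pic}^g{\cal C}$ and $X^{[g]}$ does not, by itself, produce either an equivariant deformation connecting them or an equivariant isomorphism of nearby fibers. The paper closes this as follows: Beauville's birational map $\phi:\ol{\Pic}^g{\cal C}\dashrightarrow X^{[g]}$ is equivariant for the two real structures, hence induces an equivariant Hodge isometry on $H^2$ and therefore an equivariant identification of the local universal deformation spaces; inside the identified spaces of equivariant deformations one chooses a real curve germ along which the Picard number of the fibers becomes $0$; the global Torelli theorem for hyperk\"ahler manifolds \cite{huy-bourbaki} then yields an isomorphism of the corresponding fibers over $\CC$; and, finally, this isomorphism automatically commutes with the two real structures because a hyperk\"ahler manifold with $b_2=23$ and Picard number $0$ has no nontrivial automorphisms --- a separate arithmetic argument (using that Euler's totient $\phi(n)$ is even for $n>2$, together with \cite{oguiso-1}, \cite{oguiso-2}). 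Without this chain --- in particular without the no-automorphism statement, which is precisely what upgrades the Torelli isomorphism to an equivariant one --- the equality $e\bigl(\ol{\Pic}^g_\RR{\cal C}\bigr)=e\bigl(X^{[g]}_\RR\bigr)$ remains unjustified and your proof does not close. (A smaller omission: the paper handles $g=1$ separately, applying Viro--Fubini directly to $X_\RR\to\PP^1_\RR$, since the Picard rank~$1$ hypothesis is not assumed there.)
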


The next result is a real analog of G\"ottsche's formula \cite{gottsche}. 
It expresses  the generating series for the Euler characteristic of real Hilbert 
schemes of points on complex surfaces equipped with an 
anti-holomorphic involution (in particular, defined over the reals) 
in rather explicit terms.

\begin{main}
\label{realG} For any complex surface $X$ equipped with an anti-holomorphic 
involution, we have:
\begin{equation}
\label{real-gottsche}
\sum_{g\geq 0} e(X_\RR^{[g]} ) q^g=
	\prod_{r\geq 1} \frac{1}{\left(1+(-q)^r\right)^{e_\RR}} 
	\prod_{s\geq 1} \frac{1}{(1-q^{2s})^{\frac{e_\CC-e_\RR}2}},
\end{equation}
where $e_\RR=e(X_\RR)$ and $e_\CC=e(X).$
\end{main}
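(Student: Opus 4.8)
The plan is to run the Euler-characteristic version of G\"ottsche's argument in the real setting: stratify $X_\RR^{[g]}$ by the Hilbert--Chow morphism and assemble the local contributions through a power-structure (Macdonald-type) formula. A real point of $X^{[g]}$ is a $\sigma$-invariant length-$g$ subscheme $Z\subset X$, and its support is a $\sigma$-invariant finite set that splits canonically into points lying on $X_\RR$ and conjugate pairs $\{x,\sigma(x)\}$ with $x\notin X_\RR$. Over the Hilbert--Chow morphism the fibre above such a configuration is a product of punctual pieces, one per point of the support. Since the compactly supported Euler characteristic $e_c$ is additive over locally closed stratifications and multiplicative in locally trivial fibrations, and since $e_c=e$ on the compact space $X_\RR^{[g]}$, the generating series should factor as
\[
\sum_{g\ge 0} e(X_\RR^{[g]})\,q^g \;=\; A(q)^{\,e_\RR}\cdot B(q)^{\,(e_\CC-e_\RR)/2},
\]
where $A(q)$ and $B(q)$ are the local series attached to a real point and to a conjugate pair, and the exponents are the Euler characteristics of the parameter spaces $X_\RR$ and $(X\setminus X_\RR)/\sigma$. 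The latter equals $(e_\CC-e_\RR)/2$ because $\sigma$ acts freely off $X_\RR$ (which, incidentally, shows the exponent is an integer), and the passage to the exponents is the real-locus form of Macdonald's symmetric-product formula, i.e. the statement that generalized configurations over a base $Y$ with local series $F$ have Euler-characteristic generating series $F^{e(Y)}$.

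The conjugate-pair series is immediate. A $\sigma$-invariant subscheme supported on $\{x,\sigma(x)\}$ is determined by its part $Z_x$ at $x$, the part at $\sigma(x)$ being forced to equal $\sigma(Z_x)$; hence its length is even, equal to $2k$, and its moduli space is the complex punctual Hilbert scheme $\hilb^{k}_0(\CC^2)$, of Euler characteristic $p(k)$, the number of partitions of $k$. Thus $B(q)=\sum_{k\ge 0}p(k)\,q^{2k}=\prod_{s\ge 1}(1-q^{2s})^{-1}$, which is exactly the second factor in \eqref{real-gottsche}.

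The real-point series is the crux. Choosing $\sigma$-real local analytic coordinates identifies $(X,x,\sigma)$ near a real point $x$ with $(\CC^2,0,\mathrm{conj})$, so $A(q)=\sum_{n\ge 0}a_n q^n$ with $a_n=e\big((\hilb^{n}_0(\CC^2))_\RR\big)$ for the standard conjugation. I would compute $a_n$ by equivariant localization: take a generic one-parameter subgroup $\CC^*\to(\CC^*)^2$ defined over $\RR$, whose fixed points on $\hilb^{n}_0(\CC^2)$ are the monomial ideals $I_\lambda$, $\lambda\vdash n$, all real. The punctual Hilbert scheme is paved by the associated Bialynicki--Birula cells, which are affine spaces defined over $\RR$ with the standard real structure; the cell at $I_\lambda$ has complex dimension $n-\ell(\lambda)$, so its real locus is $\RR^{\,n-\ell(\lambda)}$ and contributes $(-1)^{\,n-\ell(\lambda)}$ to $e_c$. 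Summing over fixed points gives $a_n=\sum_{\lambda\vdash n}(-1)^{\,n-\ell(\lambda)}$ (so $a_0=a_1=1$, $a_2=0$, $a_3=1$, consistent with $(\hilb^2_0)_\RR=\PP^1(\RR)=S^1$), and the elementary partition computation
\[
A(q)=\sum_{\lambda}(-1)^{\ell(\lambda)}(-q)^{|\lambda|}=\prod_{r\ge 1}\frac{1}{1+(-q)^r}
\]
yields the first factor in \eqref{real-gottsche}. Combining $A$ and $B$ gives the claimed identity.

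The hard part will be the real-locus analysis of the Bialynicki--Birula decomposition on the \emph{singular} punctual Hilbert scheme: one must verify that its torus stratification consists of cells that are genuinely defined over $\RR$ with split real structure, so that each real cell is $\RR^{d}$ contributing $(-1)^{d}$, rather than merely an $\RR$-form of affine space (for which the real Euler characteristic could differ). Closely related is the careful bookkeeping between $e$ and $e_c$ on the non-compact strata, together with the verification that Hilbert--Chow is locally trivial enough over the support strata to justify the multiplicativity of $e_c$ used in the factorization. Once these structural points are secured, the two generating-function computations above are routine.
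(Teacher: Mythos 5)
Your proposal is correct and follows essentially the same route as the paper: the same Hilbert--Chow stratification of $X^{[g]}_\RR$ into contributions from real points and conjugate pairs, the same configuration-space (Macdonald/power-structure) argument producing the exponents $e_\RR$ and $\frac{e_\CC-e_\RR}{2}$, and the same Ellingsrud--Str\o mme/Bia\l ynicki-Birula cell computation of the real local series, your partition sum $\sum_{\lambda\vdash n}(-1)^{n-\ell(\lambda)}$ agreeing with the paper's count via $p(n,n-i)$ after transposing partitions. The technical points you flag at the end (reality of the torus fixed points, conjugation-invariance of the cells so that each real cell is a genuine $\RR^k$) are precisely what the paper settles by choosing a real one-parameter subgroup of the standard torus action, which commutes with complex conjugation.
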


An immediate consequence of Theorems \ref{main} and \ref{realG} is the 
following real version of the Yau-Zaslow formula:
\begin{cor}
\label{ryz}
Let $X$ be a generic real $K3$ surface admitting a complete real $g$-dimensional 
linear system of curves of genus $g.$ If $g\ge 2,$ assume, in addition, that $X$ is of 
Picard rank $1$ and the curves in the linear system belong to a primitive divisor class.
Let $w_g=n_+-n_-$ denote the number of real rational curves in this linear system 
counted with Welschinger sign. Then, 
\begin{enumerate}
\item The number $w_g$ depends only on genus $g$ and the topology of $X_\RR$ 
and, in particular, is invariant under equivariant (with respect to complex conjugation) 
deformations of $X.$
\item The generating function for $w_g$ is as follows:
$$
\sum_{g\geq 0} w_g q^g=
	\prod_{r\geq 1}\frac {1}{(1+q^r)^{e_\RR}}
	\prod_{s\geq 1}\frac {1}{(1-q^{2s})^{\frac{e_\CC-e_\RR}{2}}}.
$$
\end{enumerate}
\end{cor}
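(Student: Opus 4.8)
The plan is to read off both assertions directly from Theorems \ref{main} and \ref{realG}. Under the hypotheses of the corollary (which are exactly those of Theorem \ref{main}), the first theorem gives
$$
w_g = n_+ - n_- = (-1)^g\, e(X_\RR^{[g]}),
$$
so the entire statement reduces to analysing the quantity $(-1)^g e(X_\RR^{[g]})$, whose generating behaviour is governed by Theorem \ref{realG}.

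For part (1), I would note that Theorem \ref{realG} exhibits the full series $\sum_{g\geq 0} e(X_\RR^{[g]})q^g$, and hence each coefficient $e(X_\RR^{[g]})$ individually, as a universal function of the two integers $e_\RR = e(X_\RR)$ and $e_\CC = e(X)$. For a $K3$ surface $e_\CC = 24$ is constant, so $w_g = (-1)^g e(X_\RR^{[g]})$ depends only on $g$ and $e_\RR$; as $e_\RR$ is a topological invariant of $X_\RR$, the dependence claim of (1) follows. Invariance under equivariant deformation is then immediate: along a deformation through real $K3$ surfaces the real loci form a locally trivial smooth family (Ehresmann applied to the fixed-point locus of the involution), so $e_\RR$ is locally constant while $e_\CC \equiv 24$, whence $w_g$ cannot change.

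For part (2), substituting $w_g = (-1)^g e(X_\RR^{[g]})$ yields
$$
\sum_{g\geq 0} w_g\, q^g = \sum_{g\geq 0} e(X_\RR^{[g]})\,(-q)^g,
$$
that is, the series of Theorem \ref{realG} evaluated at $-q$. The only point requiring care is the sign bookkeeping in the two products: in the first factor $(-q)^r$ is replaced by $(-(-q))^r = q^r$, turning each $1+(-q)^r$ into $1+q^r$, while in the second factor $(-q)^{2s}=q^{2s}$ is left unchanged. This converts the formula of Theorem \ref{realG} into precisely the asserted product, completing the proof. There is no genuine obstacle, since the corollary is a formal consequence of the two theorems; the substitution $q\mapsto -q$ is the single step where one must be attentive.
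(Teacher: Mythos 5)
Your proposal is correct and follows exactly the paper's own route: the paper likewise derives part (1) immediately from Theorems \ref{main} and \ref{realG} and obtains part (2) by the substitution $q\mapsto -q$, with the same sign bookkeeping you carried out ($1+(-q)^r \mapsto 1+q^r$, while $1-q^{2s}$ is unchanged). Your added remarks on the constancy of $e_\CC=24$ and of $e_\RR$ along equivariant deformations simply make explicit what the paper leaves implicit.
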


\notations In our calculation of Euler characteristic of real varieties we use, 
in accordance with the conventions specific for integration with respect to 
Euler characteristic \cite{viro}, the cohomology with compact supports 
(and rational coefficients). This convention allows us to apply
the additivity property, $e(X)=e(X\setminus Y)+e(Y)$, to any closed subset $Y$ of $X.$

\section{The relative compactified Picard varieties}
\label{sec: rcp}

\subsection{Preliminaries}
We start our study of the enumerative geometry of real $K3$ surfaces that admit 
a real $g$-dimensional system of curves of genus $g$ by an investigation of the 
relative compactified Picard variety of degree $d=g$ associated with such a linear
system of curves.

Recall that, over the complex field (as well as over any algebraically closed field), 
for any flat projective family ${\cal C}/S$ of reduced irreducible curves over a  
reduced irreducible base, and any degree $d,$ there exists a projective family 
$\ol{\Pic}^d{\cal C}/S,$ whose fibers are the compactified  Picard varieties  
$\ol{\Pic}^d(C_s)$ parametrizing isomorphism classes of rank 1, torsion-free, 
degree $d,$ coherent sheaves on $C_s, ~s\in S.$  Moreover, if $C_s$ 
has only planar singularities, then $\ol{\Pic}^d(C_s)$ is reduced and irreducible 
(see, for example, \cite{souza} and \cite{a-k}).

Let $X$ be a $K3$ surface admitting a $g$-dimensional linear system $\cal C$ 
of curves of genus $g$ and let us assume, as in Theorem \ref{main},  that the  
N\'eron-Severi group of $X$ is $\ZZ$ and the curves in this linear system provide 
the generator of the N\'eron-Severi group. Then all the curves in the linear system 
are reduced and irreducible, and they form a flat projective family. In this case, 
the relative compactified Picard varieties $\ol{\Pic}^d \cal C$ can be identified 
with connected components of the moduli space of simple sheaves on $X,$
as proved by Mukai \cite{mukai}. Furthermore, the varieties $\ol{\Pic}^d \cal C$ 
are nonsingular, see \cite{mukai}.

When $X$ is equipped with a real structure, i.e., an anti-holomorphic involution $c :X\to X,$ 
we can associate with it a natural real structure on $\ol{\Pic}^d \cal C.$ 
That is an anti-holomorphic  involution 
${c}_{\ol{\Pic}^d\cal C}:\ol{\Pic}^d \cal C\ra \ol{\Pic}^d \cal C$ 
that transforms  sheaves ${\FF}$ into sheaves  $c^*{\ol{\FF}}=\ol{c^*\FF}.$

The following Lemma is well known in the case of line bundles, but we did not find 
in the literature a corresponding statement for sheaves in the required form.

\begin{lem}
\label{Mukai-stronglyreal}  
If a sheaf $\FF$ representing a point of 
$\ol{\Pic}_\RR^d \cal C=\operatorname{Fix}(c_{\ol{\Pic}^d\cal C})$
is not defined over reals, then:
\begin{enumerate} 
\item $X_\RR$ is empty.
\item $\FF$ carries an anti-automorphism $c_\FF :\FF\to \FF$ with $c^2_\FF=-1.$
\end{enumerate}
In particular, under assumption that $X_\RR$ is non empty, a sheaf 
$\cal F\in \ol{\Pic}^d\cal C$ belongs to 
$\ol{\Pic}_\RR^d\cal C=\operatorname{Fix}(c_{\ol{\Pic}^d\cal C})$ 
if and only if it is defined  over $\RR$ 
(equivalently, carries an anti-automorphism of period 2). 
\end{lem}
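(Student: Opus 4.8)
The plan is to reduce the whole statement to the sign of a single real scalar coming from the descent datum, and then to read that sign off at a real point. Since $\FF$ represents a fixed point of $c_{\ol{\Pic}^d\cal C}$, there is a $\CC$-linear isomorphism $\phi\colon\FF\ra c^*\ol{\FF}$; this is the same datum as an anti-linear ($c$-conjugate-linear) sheaf endomorphism $c_\FF\colon\FF\ra\FF$ covering $c$. By Mukai's theorem the sheaf $\FF$ is simple, so $\operatorname{End}(\FF)=\CC$, and hence the $\CC$-linear automorphism $c_\FF^2$ is a scalar, $c_\FF^2=\lambda\cdot\mathrm{id}$ with $\lambda\in\CC^*$. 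Because $c_\FF$ is anti-linear one has $c_\FF^3=c_\FF\circ(\lambda\,\mathrm{id})=\ol\lambda\,c_\FF$, while also $c_\FF^3=(\lambda\,\mathrm{id})\circ c_\FF=\lambda\,c_\FF$; comparing the two forces $\lambda=\ol\lambda$, i.e. $\lambda\in\RR^*$. Replacing $c_\FF$ by $t\,c_\FF$ multiplies $\lambda$ by $|t|^2>0$, so after normalization $\lambda\in\{+1,-1\}$, and this sign does not depend on any choice, since two identifications $\phi$ differ by a nonzero scalar. Now $\FF$ is defined over $\RR$ exactly when it admits an anti-automorphism with square the identity, that is, when $\lambda=+1$; therefore ``not defined over $\RR$'' means precisely $\lambda=-1$, in which case $c_\FF^2=-1$. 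This already yields assertion (2) and the ``in particular'' dichotomy, modulo (1).

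It remains to prove (1): $\lambda=-1$ forces $X_\RR=\emptyset$. I would argue by contradiction, extracting the sign from fibres over real points. The support $C=\operatorname{supp}\FF$ satisfies $c(C)=C$, so $C$ is a real curve. For every $x\in X_\RR$ the involution $c$ fixes $x$, so $c_\FF$ induces an anti-linear map $j_x$ on the finite-dimensional complex vector space $\FF(x)=\FF\otimes k(x)$ with $j_x^2=\lambda\cdot\mathrm{id}$. When $\lambda=-1$ such a $j_x$ is a quaternionic structure, which forces $\dim_\CC\FF(x)$ to be even at every real point $x$; the same reasoning applied to $c_\FF$ acting on the groups $H^i(X,\FF)$ shows that $\chi(\FF)$ is even. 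In particular, at a point of $C$ where $\FF$ is locally free one has $\dim_\CC\FF(x)=1$, which is odd, so $\lambda=-1$ excludes real points of $C$ at which $\FF$ is locally free. Thus a quaternionic $\FF$ can meet $X_\RR$ only along the non-locally-free locus of $\FF$, which is contained in $\Sing(C)$ and is therefore finite.

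The crux is to rule out the remaining possibility, namely that $C$ meets $X_\RR$ only in finitely many non-locally-free points---typically solitary real nodes whose two branches are interchanged by $c$---where the parity obstruction above is silent. Here I would invoke the classical line-bundle case, quoted as well known: writing $\FF=\nu_*L$ for the partial normalization $\nu\colon C'\ra C$ at the non-locally-free points and the induced real line bundle $L$ on $C'$, one has $\lambda(\FF)=\lambda(L)$, and a $c$-invariant line bundle on a real curve possessing a real point is defined over $\RR$, because a real point splits the obstruction living in $\operatorname{Br}(\RR)=\{\pm1\}$. Real points of $C'$ arise from real branches at real points of $C$, so the dangerous case is exactly that of solitary nodes, where $C'$ gains no real point. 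This is where the standing hypothesis $\operatorname{NS}(X)=\ZZ\langle C\rangle$ with $[C]$ primitive must enter: I expect to show that, when $X_\RR\neq\emptyset$, a primitive generator cannot be represented by a curve whose real locus is a nonempty finite set of solitary nodes---for instance because a real member touching $X_\RR$ only in such points forces $[C]$ to restrict to a controlled, and by primitivity forbidden, class on $X_\RR$---so that $C$ must in fact carry a smooth real point, giving $\lambda=+1$ and the desired contradiction; hence $X_\RR=\emptyset$. I anticipate that this global step, excluding quaternionic sheaves supported on real curves that touch $X_\RR$ only in non-locally-free points, is the genuine obstacle, the algebraic determination of the sign in the first paragraph being routine.
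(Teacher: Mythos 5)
Your first paragraph is correct and is, in substance, the paper's own argument for assertion (2): simplicity of $\FF$ forces $c_\FF^2=\lambda\cdot\mathrm{id}$, the sign of $\lambda$ is well defined after rescaling, and $\lambda=+1$ is equivalent to descent to $\RR$ (your $c_\FF^3$ computation showing $\lambda\in\RR$ is actually cleaner than the paper's manipulation with local generators). The divergence begins with assertion (1): the paper proves it by evaluating the constant $\lambda$ at a point $x\in X_\RR$, choosing $U=c(U)$ and a local generator $f_U$ of $\FF(U)$, which yields $\lambda=\ol{h_U\circ c}\cdot h_U>0$; your even-dimensionality argument on the fibre $\FF(x)$ at a real point where $\FF$ is locally free of rank one is a rigorous version of exactly that evaluation, and you correctly note that it is silent where $\FF$ is not locally free and, of course, at real points of $X$ off the support.

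The unresolved ``global step'' is therefore a genuine gap in your proposal, but it is a gap you cannot close by the route you sketch, because assertion (1), read for arbitrary $d$, is in fact false -- and the paper's own proof stumbles at the very same place. Concretely: take a real quartic $K3$ whose real locus is a small sphere (such surfaces exist with Picard rank $1$ and primitive hyperplane class); a generic real plane disjoint from $X_\RR$ cuts a smooth genus-$3$ member $C$ with $C_\RR=\emptyset$, so primitivity does not force real members to meet $X_\RR$ at all. By Gross--Harris (the invariant classes modulo the real ones form a $\ZZ/2$ whenever $C_\RR=\emptyset$), such a $C$ carries invariant non-real, i.e.\ quaternionic, line bundles in every degree $d\equiv g-1\pmod 2$; these are points of $\ol{\Pic}^d_\RR\cal C$ not defined over $\RR$ while $X_\RR\neq\emptyset$. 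Note that the paper's evaluation of $\lambda$ at $x\in X_\RR$ is vacuous in this situation, since the stalk of $\FF$ at $x$ is zero (and at a node of Type I the stalk is not even cyclic), so the published proof tacitly assumes what fails here. What rescues the lemma in the only case the paper uses it ($d=g$, and more generally $d\equiv g\pmod 2$) is an observation you already made but did not exploit: $c_\FF^2=-1$ turns every $H^i(X,\FF)$ into a quaternionic vector space, hence $\chi(\FF)=d+1-g$ is even, so quaternionic sheaves can exist only when $d\equiv g-1\pmod 2$. For $d\equiv g\pmod 2$ this kills the quaternionic alternative outright, with no hypothesis on $X_\RR$ whatsoever; this Euler-characteristic parity is precisely the Gross--Harris-type argument the paper invokes later in Lemma \ref{strong-reality}. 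So the correct completion of your proof is: keep your first two paragraphs, replace the attempted geometric exclusion by the parity restriction $d\equiv g\pmod 2$ in the statement, and conclude with your own $\chi(\FF)$ argument.
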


\proof Clearly, the sheaves  $\FF \in \ol{\Pic}^d\cal C$ that are defined over $\RR$
belong to  $\ol{\Pic}_\RR^d\cal C.$

Assume that $\FF$ belongs to $\ol{\Pic}_\RR^d \cal C.$ 
Let us interpret an isomorphism between
${\FF}$ and  $\ol{c^*\FF }$ as an anti-automorphism $A$ of $\FF$ that lifts $c:X\to X$. 
Over each germ $(x,U)$ of $X$ pick a generator
$f_U\in \FF(U)$. With respect to these generators, the above anti-automorphism 
is given by a transformation with the property 
$\phi\cdot f_U\in \FF (U)\mapsto \ol{\phi\circ c}\cdot g_U\in\FF(c(U))$, 
where $g_{U}=h_Uf_{c(U)}\in\FF (c(U))$ denotes the image 
$A(f_U)\in\FF(c(U))$ of $f_U\in \FF (U).$

If $X_\RR\neq\emptyset,$ then pick $x\in X_\RR$ and choose $U=c(U).$ 
Applying twice the above transformation to $f_U$, we obtain
$f_U\mapsto \ol{h_U\circ c}\cdot h_U\cdot f_U.$ Since $\FF$ is a simple sheaf, 
$A^2$ (as any endomorphism of $\FF$) is a multiplication by a constant.
This constant should be equal to $\ol{h_U\circ c}\cdot h_U$, hence, it is positive. 
Therefore, dividing $A$ by a square root of this constant
we get an anti-automorphism of $\FF$ of period 2.

If $X_\RR=\emptyset,$  then the constant represented by $A^2$ gets the 
local expressions $\ol{h_U\circ c}\cdot h_{c(U)}$ and $\ol{h_{c(U)}\circ c}\cdot h_{U}$. 
Hence, it is at least real. Therefore, dividing $A$ by a square root of this constant we get an 
anti-automorphism of period 2 or 4. In the first case, $\FF$ is defined over the reals.
\qed

\ps

The canonical regular morphism $\ol{\Pic}^d\cal C\to \vert\cal C\vert$ is equivariant 
with respect to ${c}_{\ol{\Pic}^d\cal C}$ and the real structure $c_{\vert\cal C\vert}$ 
induced on $\vert\cal C\vert$ by $c.$ Hence, we get a regular real algebraic map from 
the real locus $\ol{\Pic}_\RR^d\cal C=\operatorname{Fix}(c_{\ol{\Pic}^d\cal C})$ 
to the real part $\vert\cal C\vert_\RR=\operatorname{Fix}(c_{\vert\cal C\vert})$ of our 
linear system with the real loci of the compactified Picard schemes $\ol{\Pic}^d({C}_s)$ 
as fibers over real points $s\in \vert\cal C\vert_\RR$, where ${C}_s$ denotes the 
fiber over $s$ of $\ol{\Pic}^d\cal C\to \vert\cal C\vert.$

Thus, we can apply Viro-Fubini theorem \cite{viro} to this regular real algebraic map and, 
in such a way, reduce the calculation of the Euler characteristic of $\ol{\Pic}_\RR^d\cal C$ 
(which is one of the objectives in this note)  to integrating of the Euler characteristic of the fibers.
So, we devote the rest of Section \ref{sec: rcp} to evaluating the Euler characteristic of the real 
part for compactified Picard varieties of real curves.

\subsection{Euler characteristic of real compactified Picard varieties of a real curve.}
\label{curve-calculation}

\ps

Let $Y$ be an irreducible, reduced curve of arithmetic genus $g,$ 
defined over $\RR.$ The Galois group $G=Gal(\CC/\RR)$ acts on the
locus of its complex points, which is still be denoted by $Y.$
The locus of real points, which we denote by $Y_\RR,$ is nothing but the fixed 
point set of the action. Conversely, each curve defined over $\RR$ can be 
encoded as a pair $(Y,c),$ where $c$ is an anti-holomorphic involution on 
$Y,$ the locus of complex points of the curve.

The action of $G$ lifts to the Jacobian variety $\Pic^0(Y)$ of the complexification of the 
curve $Y_\RR.$ The fixed point set of this action, $\Pic_\RR^0(Y),$
consists of the $G$-invariant complex divisor classes of degree $0.$ This abelian variety 
contains the subgroup $\left(\Pic^0_\RR(Y)\right)_+$ of those divisor classes 
that can be represented by $G-$invariant divisors of degree $0.$ Notice that the elements of 
$\left(\Pic^0_\RR(Y)\right)_+$ 
correspond to line bundles which are defined over $\RR.$

\begin{prop}\label{nullity}
Let $Y$ be a real, reduced, irreducible curve of positive geometric genus.
Then, each of the real compactified Picard varieties, $\ol{\Pic}^d_{\RR}(Y),$ 
if it is nonempty, has zero Euler characteristic. 
\end{prop}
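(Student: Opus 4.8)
The plan is to exploit the tensoring action of degree-zero line bundles and to exhibit, on each orbit, a fixed-point-free action of a positive-dimensional compact group. First I would recall that every rank-$1$, torsion-free sheaf $\FF$ on the integral curve $Y$ has the form $\nu'_*M$ for a uniquely determined partial normalization $\nu':Y'\to Y$, the one resolving exactly the non-locally-free points of $\FF$, and a line bundle $M$ on $Y'$. This produces a finite stratification of $\ol{\Pic}^d(Y)$ whose strata $S_{Y'}$ are torsors under the generalized Jacobians $\Pic^0(Y')$: the group $\Pic^0(Y)$ acts on $S_{Y'}$ through its quotient $\Pic^0(Y')$, and the latter acts simply transitively. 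This stratification is canonical, hence $G$-invariant, and therefore descends to a finite stratification of the real locus $\ol{\Pic}^d_\RR(Y)$ into the real parts $(S_{Y'})_\RR$. By the additivity of the compactly supported Euler characteristic (our standing convention, following \cite{viro}) it then suffices to show that $e\big((S_{Y'})_\RR\big)=0$ for every $Y'$ whose real stratum is nonempty.

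Next I would identify each nonempty real stratum as a torsor. Fixing a real point of $S_{Y'}$, the simply transitive action identifies $(S_{Y'})_\RR$, as a topological space, with the real Lie group $(\Pic^0_\RR(Y'))_+$ of degree-$0$ line bundles on $Y'$ that are defined over $\RR$; here one uses, exactly as in Lemma~\ref{Mukai-stronglyreal}, that over a nonempty real locus the real points are precisely the sheaves defined over $\RR$. Thus the whole problem reduces to proving $e\big((\Pic^0_\RR(Y'))_+\big)=0$.

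The key geometric input is that all the partial normalizations $Y'$ share the normalization $\widetilde Y$ of $Y$, so each has the same geometric genus $\gamma\ge 1$. Pullback along $Y'\to\widetilde Y$ gives a surjection of real algebraic groups $\Pic^0(Y')\to\Pic^0(\widetilde Y)$ with affine kernel, where $\Pic^0(\widetilde Y)$ is an abelian variety of dimension $\gamma$. On real points this yields a homomorphism from $(\Pic^0_\RR(Y'))_+$ into the compact real Lie group $\Pic^0_\RR(\widetilde Y)$ whose image has real dimension $\gamma\ge 1$, hence contains a circle. Lifting that circle to a compact one-parameter subgroup $S^1\subset(\Pic^0_\RR(Y'))_+$ — possible because a maximal compact subgroup of a connected Lie group surjects onto a maximal compact subgroup of any connected quotient — and letting $S^1$ act by translation, I obtain a free $S^1$-action. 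Therefore $e\big((\Pic^0_\RR(Y'))_+\big)=0$, and summing over the finitely many strata gives $e\big(\ol{\Pic}^d_\RR(Y)\big)=0$.

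The hard part will be the bookkeeping of the real structures rather than any single computation: one must verify that the orbit stratification and the torsor structure descend to $\RR$, that the real loci of the strata are genuine torsors under $(\Pic^0_\RR(Y'))_+$ rather than a twisted form (this is where the dichotomy of Lemma~\ref{Mukai-stronglyreal} enters), and — most delicately — that the positive-dimensional abelian part, which appears only as a \emph{quotient} of $\Pic^0(Y')$, still forces a genuine compact circle inside the acting real group. Once a single fixed-point-free circle action has been produced on each nonempty real stratum, the vanishing of its Euler characteristic is automatic, and the additivity recorded above assembles the strata into the stated conclusion.
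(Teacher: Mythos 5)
Your proposal has a genuine gap at its very first step. The structural claim on which the whole argument rests --- that every rank-$1$ torsion-free sheaf $\FF$ on the integral curve $Y$ is of the form $\nu'_*M$ with $M$ a \emph{line bundle} on a partial normalization $\nu':Y'\to Y$ --- is Seshadri's theorem for \emph{nodal} curves, but it is false for general integral curves, and Proposition~\ref{nullity} must cover arbitrary reduced irreducible real curves: in the application to $K3$ surfaces, Chen's theorem gives nodality only for the \emph{rational} members of the linear system, whereas the positive-genus members (exactly the ones this proposition treats) can have arbitrary planar singularities. Concretely, take the planar singularity $x^3=y^4$, i.e.\ $R=\CC[[t^3,t^4]]$, and the $R$-module $M=R+tR\subset\CC[[t]]$. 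It is torsion-free of rank one, and a direct computation gives $\operatorname{End}(M)=\CC+t^3\CC[[t]]$. Any partial normalization over which $M$ is a module has local ring contained in $\operatorname{End}(M)$, which has codimension $2$ in $\CC[[t]]$; but $M$ has codimension $1$, while a rank-one \emph{free} module over a ring of codimension $2$ has codimension at least $2$ in $\CC[[t]]$. Hence $M$ is not invertible over any intermediate ring, and a sheaf with this local structure is not the pushforward of a line bundle from any partial normalization. Consequently your stratification into torsors does not exist: the loci $\{\FF:\mathcal{E}nd(\FF)=\nu'_*\OO_{Y'}\}$ carry a free $\Pic^0(Y')$-action but are \emph{not} homogeneous, so fixing a real point does not identify a real stratum with a real Lie group, and the reduction to $e\bigl((\Pic^0_\RR(Y'))_+\bigr)=0$ collapses.

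There is also a smaller flaw in the Lie-theoretic step: from ``the image has real dimension $\gamma\ge 1$'' you cannot conclude ``hence contains a circle'' --- a positive-dimensional homomorphic image of a Lie group in a compact torus can be a dense one-parameter subgroup containing no circle. What rescues this is that the quotient map of real algebraic groups is a submersion on real points, so the image of the identity component is open, hence closed, hence a compact positive-dimensional subgroup; only then do your maximal-compact lifting and the free circle action go through. The paper avoids both difficulties by arguing globally with the \emph{full} normalization $\widetilde Y$: the exact sequence $0\to H\to(\Pic^0_\RR(Y))_+\to(\Pic^0_\RR(\widetilde Y))_+\to 0$ splits because $H$ is divisible, so the real Jacobian of $\widetilde Y$ --- whose identity component is a compact torus of dimension equal to the geometric genus $\ge 1$ --- embeds into $(\Pic^0_\RR(Y))_+$ and, by the real adaptation of Beauville's Lemma~2.1, acts \emph{freely on all of} $\ol{\Pic}^d_\RR(Y)$; no classification of the sheaves and no stratification is needed. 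If you wish to keep a stratified argument, you must replace ``torsor'' by ``stratum with a free $\Pic^0(Y')$-action'' and rerun the circle argument on each stratum, but at that point the paper's single global free action is both simpler and strictly more general.
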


\begin{proof}
The arguments of Beauville \cite[Proposition 2.2 and Lemma 2.1]{beauville} 
can be adapted for curves defined over $\RR.$  

\ps 

Let $n:Y'\ra Y$ denote the normalization of the curve $Y.$ 
There exists a unique anti-holomorphic involution $c'$ on $Y',$ 
such that $n\circ c'=c\circ n.$ 
Notice that $Y'_\RR$ is non-empty if $\dim Y_\RR=1$ 
(equivalently, $ Y_\RR$ contains smooth points),
but it may be empty when $Y_\RR$ has only 
$0-$dimensional components.

Regardless whether the curve $Y_\RR$ has smooth real points or not \cite[Proposition 21.8.5]{ega4},
we have an exact sequence of group schemes over $\RR:$
$$
0\ra H\ra \left(\Pic_{\RR}^0(Y)\right)_+\xrightarrow{n^*}\left(\Pic_{\RR}^0(Y')\right)_+\ra 0,
$$
where $H$ is a direct product of copies of additive and multiplicative real algebraic groups.
Since $H$ is divisible, the sequence splits, so  $\left(\Pic_{\RR}^0({Y'})\right)_+$ injects into 
$(\Pic_{\RR}^0(Y))_+.$ 
As it follows from \cite[Lemma 2.1]{beauville}, $\left(\Pic_{\RR}^0({Y'})\right)_+$ 
acts freely on $\ol \Pic_{\RR}^d(Y)$ and to observe the vanishing of the Euler characteristic, 
$e(\ol{\Pic}^d_{\RR}(Y))=0,$  there remains to notice that $\left(\Pic_{\RR}^0({Y'})\right)_+$ 
contains at least one real torus, namely, the connected component of the identity.
\end{proof}

Now, let us turn to the case of real rational nodal curves. To understand the topology of the 
compactified Picard variety associated to such curves, we start by taking a closer look
to the complex case.

\ps 

According to \cite{seshadri},  admissible, that is torsion-free, rank $1,$ coherent,    
sheaves $\FF$ along a nodal curve $Y$ defined over $\CC$ satisfy the following conditions:
\begin{itemize}
\item[ a)] If $p\in Y$ is nonsingular, then the stalk $\FF_{p}$ is isomorphic to the stalk 
of the structure sheaf $\OO_{p}.$
\item[ b)] If $p\in Y$ is a node, then either $\FF_{p}$ is either the maximal ideal $m_p,$ 
in which case $\FF$ will be called of Type I at the node $p$, or 
$\FF_{p}\simeq \OO_{p},$ in which case $\FF$ will be called of Type II at the node $p.$
\end{itemize}

Let $n:Y'\ra Y$ be the partial normalization of $Y$ at a node $p,$ and denote by $p_1$ and 
$p_2$ the two points in the pre-image of $p.$ 

\ps

If $\FF$ is a degree $d$ admissible sheaf of Type I at the node $p,$ then the sheaf 
$\FF'=n^*\FF/Torsion$ is an admissible sheaf on $Y'$ and $\FF\simeq n_*\FF'.$ Moreover,
$$
\deg \FF'=\deg\FF-1=d-1.
$$

\begin{lem}[Lemma 3.1 \cite{beauville} and Seshadri \cite{seshadri}]
\label{type1:embed}
Let $n:Y'\ra Y$ be a partial normalization of $Y$ at the nodal point $p.$ 
The morphism 
$$
{n}_*:\ol{\Pic}^{d-1}(Y')\ra \ol{\Pic}^d(Y)
$$ 
is a closed embedding. Moreover,
 $\displaystyle
 \im (n_*)=\{\FF\in \ol{\Pic}^d(Y)~{\text{of Type I at}}~p\}.
 $
 \qed
\end{lem}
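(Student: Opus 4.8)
The plan is to understand the geometry of torsion-free rank $1$ sheaves of Type I at a node and show they are exactly the pushforwards from the partial normalization.

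First I would recall the local structure at the node $p$. In suitable local coordinates the node is $\operatorname{Spec}\big(\CC[x,y]/(xy)\big)$, with the two branches given by the two points $p_1, p_2$ lying over $p$ in $Y'$. A Type I sheaf $\FF$ at $p$ has stalk $\FF_p \simeq m_p$, the maximal ideal at the node, so locally $\FF$ looks like the ideal sheaf of the node. I would then verify the claim, already isolated in the excerpt, that for a Type I sheaf the pullback modulo torsion $\FF' = n^*\FF/\mathrm{Torsion}$ is admissible on $Y'$, that $\FF \simeq n_*\FF'$, and that $\deg\FF' = d-1$. This identifies the set of Type I sheaves of degree $d$ on $Y$ with the image of $n_*$ applied to admissible sheaves of degree $d-1$ on $Y'$, which sets up the set-theoretic equality $\im(n_*) = \{\FF \in \ol{\Pic}^d(Y) \text{ of Type I at } p\}$.

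Next I would show $n_*$ is a morphism of schemes and that it is injective on points: injectivity follows because from $n_*\FF'$ one recovers $\FF'$ by pulling back and quotienting by torsion, so $n_*$ has a set-theoretic inverse on its image. To upgrade injectivity to a closed embedding, I would argue that $n_*$ is a proper monomorphism — properness because both compactified Picard schemes are projective over the base and $n_*$ is defined by a functorial pushforward construction, and being a monomorphism because the inverse operation $\FF \mapsto n^*\FF/\mathrm{Torsion}$ shows $n_*$ induces an injection on $T$-points for every test scheme $T$. A proper monomorphism of schemes is a closed immersion, which gives the desired conclusion.

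The main obstacle I expect is not the set-theoretic description, which is essentially local and follows Seshadri's analysis, but rather the scheme-theoretic claim that $n_*$ is a closed embedding rather than merely a bijection onto its image. The delicate point is to control the behavior in families, i.e. to check that pushforward commutes with base change well enough that $n_*$ is genuinely a monomorphism of functors and that its image is closed. Here I would lean on the cited results of Beauville \cite{beauville} and Seshadri \cite{seshadri}, where the relevant flatness and base-change statements for admissible sheaves on families of nodal curves are established; these let me conclude that $n_*$ is a proper monomorphism, hence a closed immersion, completing the proof.
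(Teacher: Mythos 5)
The first thing to note is that the paper contains no proof of this lemma at all: it is stated as a quoted result, attributed to Lemma 3.1 of \cite{beauville} and to Seshadri \cite{seshadri}, with the end-of-proof box placed immediately after the statement. So there is no internal argument to compare yours against; the relevant comparison is with the argument in those sources, which is essentially what you reconstruct. Your skeleton is the standard and correct one: Seshadri's local classification of torsion-free rank-one stalks at a node gives the degree shift $\deg \FF' = d-1$ and the set-theoretic identification of $\im(n_*)$ with the Type I locus; pushforward along the finite map $n$ preserves flatness of families, so $n_*$ is a morphism; any morphism from a projective scheme to a separated scheme is proper; and a proper monomorphism is a closed immersion.

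Two caveats. First, your closing appeal to \cite{beauville} for ``the relevant flatness and base-change statements'' is circular: the statement you are proving \emph{is} Beauville's Lemma 3.1, and his paper does not contain independent base-change lemmas to lean on; the legitimate references for the moduli-theoretic input (representability and behavior in families) are Seshadri \cite{seshadri}, D'Souza \cite{souza}, and Altman--Kleiman \cite{a-k}. Second, the monomorphism step needs one more observation before it works for arbitrary, possibly non-reduced, test schemes $T$: quotients by torsion do not in general commute with base change. Here the situation is saved by the fact that $p_1, p_2$ are \emph{smooth} points of $Y'$, so any $T$-flat family of admissible sheaves on $Y'$ is locally free along $\{p_1,p_2\}\times T$; consequently, near $\{p\}\times T$ any pushforward family is, locally, $(n\times\mathrm{id})_*\OO_{Y'\times T}$ twisted by a line bundle, the kernel of the counit $n^*n_*\GG'\to\GG'$ is $T$-flat, and the recovery $\GG'\cong n^*n_*\GG'/\mathrm{Torsion}$ is functorial in families. (Alternatively, one can recover $\GG'$ as $\mathcal{H}om_{\OO_Y}(n_*\OO_{Y'}, n_*\GG')$, after checking that any $\OO_Y$-linear isomorphism between such pushforwards is automatically $n_*\OO_{Y'}$-linear; this is a local computation at the node.) With these two patches your argument is complete and is, in substance, the proof in the cited literature.
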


\ps

Let $\ol{\Pic}^d_{b}(Y)$ denote this boundary stratum of Type I admissible 
sheaves of degree $d,$ and let $\ol{\Pic}^d_{o}(Y)$ be its complement 
$\ol{\Pic}^d(Y)\setminus  \ol{\Pic}_b^d(Y).$ If  $\FF\in \ol{\Pic}^d_{o}(Y),$ 
then $\FF'=n^*\FF$ is again an admissible sheaf on $Y',$ 
locally free at $p_1$ and $p_2,$ and $\deg \FF'=\deg \FF=d,$ 
i.e., $\FF'\in \ol{\Pic}^d(Y').$ Moreover, there exists an isomorphism 
between that stalks $\FF'_{p_1}$ and  $\FF'_{p_2}.$
Conversely, for any $\FF'\in \ol{\Pic}^d(Y')$ locally free at $p_1$ and $p_2$,
and an isomorphism between $\FF'_{p_1}$ and  $\FF'_{p_2},$ 
there exists a unique $\FF\in \ol{\Pic}^d_{o}(Y)$ such that $\FF'=n^*\FF.$ 
The set of isomorphisms between $\FF'_{p_1}$ and  $\FF'_{p_2}$ is 
identified with $\CC^*$ and the pull-back map
\begin{equation}
\label{open_stratum}
n^*:\ol{\Pic}^d_{o}(Y)\ra \ol{\Pic}^d(Y')
\end{equation}
has a fiber bundle structure with fiber $\CC^*.$

\ps

Assume, now, again that our real reduced irreducible nodal curve $Y$ 
of arithmetic genus $g$ is defined over reals. The nodes of $Y$ lie either 
in $Y_\RR,$ or they come in pairs swapped by the involution $c.$  
The nodal points lying  in $Y_\RR$ 
can be either zero-dimensional components of $Y_\RR,$ called 
{\it solitary points,} or they are nodes of the one-dimensional components 
of $Y_\RR,$ in which case they will be called {\it cross points}. 
In the completion of the local ring, the solitary points are locally 
given by the equation $x^2+y^2=0,$ while the cross points by $x^2-y^2=0.$
Let $r$ denote the number of cross-points, 
$s$  the number of solitary points,
and $t$ the number of complex conjugated pairs of nodal points of $Y.$ 
When the curve $Y$ has geometric genus $0,$ since all the 
singular points of the underlying complex curve are nodes, 
we have $r+s+2t=g.$

The partial normalization $Y'$ at a given node $p\in Y_\RR,$ 
or at a pair of conjugate nodes $\{q, c(q)\}\in Y\setminus Y_\RR$ 
has an induced real structure $c',$ and the partial normalization 
map $n:Y'\ra Y$ is equivariant. Accordingly, the induced maps,   
${n}_*:\ol{\Pic}^{d-1}(Y')\ra \ol{\Pic}^d(Y)$ and 
$n^*:\ol{\Pic}^d_{o}(Y)\ra \ol{\Pic}^d(Y')$ in the case of a real node, 
as well as ${n}_*:\ol{\Pic}^{d-2}(Y')\ra \ol{\Pic}^d(Y)$ and 
$n^*:\ol{\Pic}^d_{o}(Y)\ra \ol{\Pic}^d(Y')$ in the case of a pair 
of conjugate ones, are equivariant as well. In both cases, as well as 
for any normalization defined over reals, the maps $n_*$ and $n^*$ 
send sheaves defined over reals to sheaves defined over reals.

With respect to the real structure on $Y',$ for a real nodal point 
$p,$ the pre-image of $p$ consists of a pair of points $\{p_1, p_2\},$ 
which are either smooth points in $Y'_\RR$ or conjugate smooth points 
with respect to $c',$ depending on whether $p$ is a cross point 
or a solitary point of $Y,$ respectively.  For pairs of conjugate nodes 
$\{q, c(q)\},$  the pre-image consists of two pairs of conjugate smooth 
points $\{(q_1,c'(q_1)), (q_2, c'(q_2))\}.$

From the equivariance of the partial normalization map, we immediately 
get the following real version of Lemma \ref{type1:embed}.
\begin{lem}
\label{rbs} 
Let $n:Y'\ra Y$ be a partial normalization of $Y$ at a real nodal point or 
at a pair of conjugate nodes. 
\begin{itemize} 
\item[ 1)] In the case of a real nodal point, the induced morphisms 
$$
n_*:\ol{\Pic}^{d-1}_{\RR}(Y')\ra \ol{\Pic}^d_{\RR}(Y), 
\quad n_*:(\ol{\Pic}^{d-1}_{\RR}(Y'))_+\ra (\ol{\Pic}^d_{\RR}(Y))_+
$$ 
are closed emdeddings.
\item[ 2)] In the case of a pair of conjugate nodes, the morphisms
$$
n_*:\ol{\Pic}^{d-2}_{\RR}(Y')\ra \ol{\Pic}^d_{\RR}(Y), 
\quad n_*:(\ol{\Pic}^{d-2}_{\RR}(Y'))_+\ra (\ol{\Pic}^d_{\RR}(Y))_+
$$  
are closed emdeddings.
\end{itemize}
In both cases, the images are the sets 
$\ol{\Pic}^d_{\RR, b}(Y)$ and $(\ol{\Pic}^d_{\RR, b}(Y))_+$
of Type I degree $d$ admissible sheaves in 
$\ol{\Pic}^d_{\RR}(Y)$ and $(\ol{\Pic}^d_{\RR}(Y))_+,$ 
respectively.
\qed
\end{lem}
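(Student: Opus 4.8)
The plan is to deduce the real statement by restricting the complex closed embedding of Lemma~\ref{type1:embed} to the fixed loci of the relevant real structures and then tracking its image. First I would recall that over $\CC$ the map $n_*$ is a closed embedding whose image is exactly the locus of Type~I sheaves: in the case of a real node this is $n_*:\overline{\Pic}^{d-1}(Y')\to\overline{\Pic}^d(Y)$, while in the case of a conjugate pair of nodes, where partial normalization at the two nodes lowers the degree by $2$, it is $n_*:\overline{\Pic}^{d-2}(Y')\to\overline{\Pic}^d(Y)$. Since the partial normalization map is equivariant for the induced real structures, $n_*$ intertwines $c_{Y'}$ and $c_Y$; writing $\overline{\Pic}^{\bullet}$ for $\overline{\Pic}^{d-1}$ or $\overline{\Pic}^{d-2}$ according to the case, it therefore carries $\operatorname{Fix}(c_{Y'})=\overline{\Pic}^{\bullet}_\RR(Y')$ into $\operatorname{Fix}(c_Y)=\overline{\Pic}^d_\RR(Y)$.

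The key observation is that the restriction of a closed embedding to a closed invariant subset is again a closed embedding; indeed, equivariance and injectivity give $\operatorname{Fix}(c_{Y'})=n_*^{-1}(\operatorname{Fix}(c_Y))$, so the fixed loci match up exactly and $n_*:\overline{\Pic}^{\bullet}_\RR(Y')\to\overline{\Pic}^d_\RR(Y)$ is a closed embedding. To identify the image I would again use equivariance together with injectivity. On one hand, a real sheaf $\FF'$ is sent to $n_*\FF'$, which is simultaneously real (by equivariance) and of Type~I (by the complex statement), so the image lies in $\overline{\Pic}^d_{\RR,b}(Y)$. On the other hand, if $\FF\in\overline{\Pic}^d_\RR(Y)$ is a real Type~I sheaf, then by Lemma~\ref{type1:embed} we have $\FF=n_*\FF'$ for a unique $\FF'$, and from $n_*(c_{Y'}\FF')=c_Y(n_*\FF')=c_Y\FF=\FF=n_*\FF'$ together with injectivity we obtain $c_{Y'}\FF'=\FF'$; thus $\FF'$ is real and $\FF$ lies in the image. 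This yields the desired equality of the image with $\overline{\Pic}^d_{\RR,b}(Y)$.

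For the $(\,\cdot\,)_+$ versions I would invoke the remark recorded just before the statement, that $n_*$ sends sheaves defined over $\RR$ to sheaves defined over $\RR$; this shows the restriction carries $(\overline{\Pic}^{\bullet}_\RR(Y'))_+$ into $(\overline{\Pic}^d_\RR(Y))_+$, and it is again a closed embedding by the argument above. For the reverse inclusion on images, the point is that $n_*$ and its inverse $n^*/\mathrm{Torsion}$ on Type~I sheaves are functorial, so a period-$2$ anti-automorphism of $\FF$ transports to one of the associated $\FF'$ and back; hence $\FF$ is defined over $\RR$ if and only if $\FF'$ is, and the image of the $(\,\cdot\,)_+$ map is precisely $(\overline{\Pic}^d_{\RR,b}(Y))_+$. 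The only genuinely delicate point I anticipate is this last transfer of the anti-automorphism through $n_*$; everything else is a formal consequence of applying equivariance to the complex embedding of Lemma~\ref{type1:embed}.
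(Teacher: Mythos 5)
Your proposal is correct and takes essentially the same route as the paper: the paper deduces Lemma~\ref{rbs} in one line from the equivariance of the partial normalization map, combined with the complex statement (Lemma~\ref{type1:embed}) and the preceding observation that $n_*$ and $n^*$ carry sheaves defined over $\RR$ to sheaves defined over $\RR$. Your write-up simply makes explicit the fixed-locus bookkeeping (equivariance plus injectivity of the closed embedding, and the transport of the period-$2$ anti-automorphism through $n_*$ and $n^*/\mathrm{Torsion}$) that the paper treats as immediate.
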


To perform a full recursion in calculating the Euler characteristic 
there remain to control the other parts of the real Picard variety, 
$\ol{\Pic}^d_{\RR,o}(Y)$ and, overall, $(\ol{\Pic}^d_{\RR,o}(Y))_+$.

\begin{lem}
\label{ros}
The mapping $n^*:(\ol{\Pic}^d_{\RR,o}(Y))_+\ra (\ol{\Pic}_\RR^d(Y'))_+$ 
is a fiber bundle with fiber isomorphic to 
\begin{itemize}
\item[ i)] $S^1,$ for partial normalizations at a solitary point,
\item[ ii)] $\RR^*,$ for partial normalizations at a cross point,
\item[ iii)] $\CC^*,$  for partial normalizations at a pair of conjugate nodes.
\end{itemize}
\end{lem}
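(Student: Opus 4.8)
The plan is to establish the fiber bundle structure in each of the three cases by descending the complex fiber bundle structure of~\eqref{open_stratum}, namely $n^*:\ol{\Pic}^d_{o}(Y)\ra \ol{\Pic}^d(Y')$ with fiber $\CC^*$, to the real loci and then carefully identifying the real points of the fiber in each of the three local models. First I would note that by Lemma~\ref{Mukai-stronglyreal} (applied here to curves rather than $K3$ surfaces, but the same anti-automorphism argument goes through) a sheaf in $(\ol{\Pic}^d_{\RR,o}(Y))_+$ is precisely one carrying a period-$2$ anti-automorphism, so that $n^*$ sends it to a real sheaf $\FF'\in(\ol{\Pic}_\RR^d(Y'))_+$; equivariance of $n$ guarantees that the complex gluing datum, an isomorphism $\FF'_{p_1}\simeq\FF'_{p_2}$ in $\CC^*$, is compatible with the involution. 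The content of the lemma is then to compute which gluing isomorphisms produce a \emph{real} sheaf on $Y$, i.e. the fixed locus of the induced involution on the $\CC^*$ of gluings.

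Next I would analyze the induced involution on the gluing parameter $\lambda\in\CC^*$ separately in the three geometric cases. In case~(i), a solitary point, the two preimages $\{p_1,p_2\}$ are a conjugate pair under $c'$, so the anti-holomorphic involution swaps the stalks $\FF'_{p_1}$ and $\FF'_{p_2}$ and sends the gluing $\lambda$ to $\ol\lambda^{-1}$ (the composition of conjugation with the swap-induced inversion); the fixed locus $\{\lambda\in\CC^*:\lambda\ol\lambda=1\}$ is the unit circle $S^1$. In case~(ii), a cross point, both preimages lie in $Y'_\RR$ and are fixed individually, so the involution acts by $\lambda\mapsto\ol\lambda$, whose fixed locus is $\RR^*\subset\CC^*$. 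In case~(iii), a pair of conjugate nodes $\{q,c(q)\}$, the partial normalization introduces two independent gluing parameters $\lambda_1,\lambda_2\in\CC^*$, one at $q$ and one at $c(q)$, and the involution interchanges them via $(\lambda_1,\lambda_2)\mapsto(\ol\lambda_2,\ol\lambda_1)$; the fixed locus is the graph $\{\lambda_1=\ol\lambda_2\}$, which is isomorphic to a single copy of $\CC^*$ through the projection to $\lambda_1$. In each case the reality condition on the gluing matches exactly the reality of the resulting sheaf on $Y$, giving the claimed fibers $S^1$, $\RR^*$, and $\CC^*$ respectively.

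Finally I would verify that $n^*$ is genuinely a locally trivial bundle over $(\ol{\Pic}_\RR^d(Y'))_+$ and not merely a surjection with the stated fibers. Since the complex map~\eqref{open_stratum} is a $\CC^*$-bundle, it is locally trivial, and I would choose the local trivializations to be compatible with the real structure over a neighborhood of each real base point; the real structure then restricts fiberwise to the model involution just computed, so the real map is a locally trivial bundle with the indicated fiber. Concretely this amounts to exhibiting an equivariant local section of $n^*$ over a small Zariski or analytic neighborhood of a real sheaf $\FF'$, which exists because real sheaves can be glued from real local data; tensoring by this section trivializes the bundle equivariantly.

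The main obstacle I expect is precisely the bookkeeping of the involution on the gluing data, in particular getting the correct sign/inversion in the solitary and conjugate-pair cases. The subtlety is that the anti-holomorphic involution acts \emph{anti}-linearly on the stalks and may additionally swap $p_1$ and $p_2$, so the induced action on the $\CC^*$ of gluings combines complex conjugation with an inversion coming from the swap; untangling whether one lands on $\lambda\mapsto\ol\lambda$ or $\lambda\mapsto\ol\lambda^{-1}$ requires a careful choice of compatible generators $f_{p_i}$ of the stalks $\FF'_{p_i}$, in the spirit of the local computation in the proof of Lemma~\ref{Mukai-stronglyreal}. Once the correct model involution is pinned down in each case, identifying its fixed locus as $S^1$, $\RR^*$, or $\CC^*$ is immediate.
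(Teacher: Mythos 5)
Your proposal is correct and follows essentially the same route as the paper: both descend Seshadri's $\CC^*$-bundle description \eqref{open_stratum} of $n^*$ to the real locus and determine the fiber by computing, case by case, the compatibility condition that the anti-holomorphic involution imposes on the gluing parameter $\lambda$, namely $\lambda\bar\lambda=1$ (so $S^1$) at a solitary point, $\lambda=\bar\lambda$ (so $\RR^*$) at a cross point, and $\lambda=(\lambda',\ol{\lambda'})$ (so $\CC^*$) for a conjugate pair. The only organizational difference is that the paper phrases the fibration via the free action of the kernel $K=\ker\bigl(n^*:\Pic^0(Y)\to\Pic^0(Y')\bigr)$ and identifies the real fibers with $K_\RR$, which renders the local triviality you check by equivariant local sections essentially automatic.
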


\begin{proof} 
Recall \cite{seshadri} that $n^*:\ol{\Pic}^d_{o}(Y)\ra \ol{\Pic}^d(Y')$ is a fiber 
bundle whose fibers are the orbits of the canonical action of the kernel $K$ of
$n^*:{\Pic}^0(Y)\ra {\Pic}^0(Y')$ on $\ol{\Pic}^d_{o}(Y).$ Therefore, the fiber 
$\{\FF\in \ol{\Pic}^d_{o}(Y) : n^*\FF=\GG\}$ over  each given $\GG\in \ol{\Pic}^d(Y')$
is canonically identified with the space of pairs $(\GG,\lambda),$ where
$\lambda\in Isom(\GG_{p_1},\GG_{p_2})$ if we normalize one node, and 
$\lambda\in Isom(\GG_{p'_1},$ $\GG_{p'_2}) \times Isom(\GG_{p''_1},\GG_{p''_2}) $ 
if we normalize two nodes. Since $\GG$ is locally free and of rank one at 
the points above the normalized nodes, the set $Isom(\GG_{p_1},\GG_{p_2})$ 
can be canonically identified with $\CC^*,$ in the first case, and with  
$\CC^*\times \CC^*,$ in the second case.

If $n: Y\to Y'$ is  real, the real structure acts naturally on $K,$ so that the 
action of the real part $K_\RR$ of $K$ on $\ol{\Pic}^d_{o}(Y)$ preserves 
the fibration $n^*:(\ol{\Pic}^d_{\RR,o}(Y))_+\ra (\ol{\Pic}_\RR^d(Y'))_+$. 
Since $K$ and, hence, $K_\RR$ act freely, the fibers of the latter fibration 
are canonically identified with $K_\RR$ which by definition is formed by 
$\lambda\in K$compatible with the action of the real structure on the 
stalks of $\cal G$.

In the case of a partial normalization of a solitary double point, as $p_2=c(p_1),$ 
the compatibility condition amounts to the commutativity of the following diagram:
\begin{equation*}
\xymatrix{
\GG_{p_2}\ar[r]^c & {\ol \GG_{p_1}} \ar[d]^{\bar \lambda}\\
\GG_{p_1}\ar[u]^{\lambda} \ar[r]^c & {\ol \GG_{p_2}} }
\end{equation*}
which yields $\lambda\bar \lambda=1,$ i.e., $\lambda \in S^1.$

In the case of a partial normalization of a cross point, as 
$p_1=c(p_1)$ and $p_2=c(p_2),$ the compatibility condition 
amounts to the commutativity of the following diagram:
\begin{equation*}
\xymatrix{
\GG_{p_1}\ar[d]_{\lambda}\ar[r]^c & {\ol \GG_{p_1}}\ar[d]^{\bar \lambda} \\
\GG_{p_2}\ar[r]^c & {\ol \GG_{p_2}} }
\end{equation*}
which yields $\lambda=\bar \lambda,$ i.e., $\lambda \in \RR^*.$

In the case of a partial normalization of a pair of conjugate nodes, 
the compatibility condition amounts in permuting the factors in 
$Isom(\GG_{p'_1},\GG_{p'_2})\times Isom(\GG_{c(p'_1)},\GG_{c(p'_2)})$ 
which yields 
$\lambda=(\lambda',\ol{\lambda'})$ with $\lambda'\in\CC^*.$
\end{proof}

\begin{cor} 
\label{e-fiber}
Let $H_{\RR,o}$ denote the fiber of $n^*.$ Then $e(H_{\RR,o})=-2$ 
if $H_{\RR,o}$ is $\RR^*,$ and  $0$ otherwise. \qed
\end{cor}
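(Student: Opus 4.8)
The plan is to simply read off the three possible fibers from Lemma \ref{ros} and evaluate the Euler characteristic of each, working throughout with the compactly-supported convention fixed in the Notations. Lemma \ref{ros} already identifies $H_{\RR,o}$ as being isomorphic to $S^1$, to $\RR^*$, or to $\CC^*$, according to whether the partial normalization is taken at a solitary point, a cross point, or a pair of conjugate nodes. So nothing remains except to compute $e$ on these three model spaces and record which one is nonzero.

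First I would dispose of the compact case: since $S^1$ is compact, its compactly-supported Euler characteristic agrees with the ordinary one, giving $e(S^1)=0$. For the two noncompact cases I would invoke the additivity property $e(X)=e(X\setminus Y)+e(Y)$ allowed by our convention, together with the elementary values $e(\{pt\})=1$, $e(\RR)=-1$, and $e(\CC)=e(\RR^2)=1$. Writing $\RR^*=\RR\setminus\{0\}$ then yields $e(\RR^*)=e(\RR)-e(\{0\})=-1-1=-2$, while $\CC^*=\CC\setminus\{0\}$ yields $e(\CC^*)=e(\CC)-e(\{0\})=1-1=0$. Thus the fiber contributes $-2$ precisely in the cross-point case, where $H_{\RR,o}\cong\RR^*$, and contributes $0$ in the other two cases, which is exactly the asserted statement.

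The computation is entirely routine, so there is no substantial obstacle; the one point that genuinely requires care is to remain consistent with the Euler characteristic with \emph{compact} supports rather than the homotopy-invariant one. This matters precisely for $\RR^*$: as a homotopy type it is a pair of points and would naively give $+2$, whereas the additive invariant gives $-2$, and it is this sign that must propagate correctly through the recursion organized by Lemmas \ref{rbs} and \ref{ros}. Hence the only thing I would guard against is an inadvertent switch of conventions between the compact and the noncompact strata.
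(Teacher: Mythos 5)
Your proof is correct and matches the paper's intent exactly: the paper marks this corollary as immediate from Lemma \ref{ros}, and the only content is the compactly-supported computation $e(S^1)=0$, $e(\RR^*)=-2$, $e(\CC^*)=0$ that you carry out. Your closing caveat is also well placed, since the value $-2$ (rather than the homotopy-invariant $+2$) is precisely what produces the sign $(-1)^r$ in Proposition \ref{Euler-picard}.
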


In the cases that we are interested in, there is no difference between 
$\ol{\Pic}^d_{\RR}(Y)$ and $(\ol{\Pic}^d_{\RR}(Y))_+.$

\begin{lem} 
\label{strong-reality}
Let $Y$ be a real reduced irreducible rational nodal curve of arithmetic genus $g.$
If $d=g \mod 2$, then $\ol{\Pic}^d_{\RR}(Y)=(\ol{\Pic}^d_{\RR}(Y))_+$.
\end{lem}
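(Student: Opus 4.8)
The plan is to collapse the difference between $\ol{\Pic}^d_\RR(Y)$ and $(\ol{\Pic}^d_\RR(Y))_+$ into a single sign. Since $Y$ is integral, every $\FF\in\ol{\Pic}^d(Y)$ is simple, so any anti-automorphism $A$ of $\FF$ lifting $c$ has $A^2$ equal to a nonzero real scalar; as $A$ is determined up to a factor in $\CC^*$ and rescaling by $z$ multiplies $A^2$ by $|z|^2>0$, the sign $\epsilon(\FF)=\mathrm{sign}(A^2)\in\{\pm1\}$ is a well-defined invariant of a point of $\ol{\Pic}^d_\RR(Y)$, exactly as in the proof of Lemma~\ref{Mukai-stronglyreal}. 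By that same reasoning $\FF$ lies in $(\ol{\Pic}^d_\RR(Y))_+$ precisely when $\epsilon(\FF)=+1$. Since the inclusion $(\ol{\Pic}^d_\RR(Y))_+\subseteq\ol{\Pic}^d_\RR(Y)$ is automatic, the statement reduces to showing $\epsilon\equiv+1$ on $\ol{\Pic}^d_\RR(Y)$ whenever $d\equiv g\ \mathrm{mod}\ 2$.

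First I would dispose of the case $r\ge1$, when $Y_\RR$ has a one-dimensional component. There $\FF$ is locally free at a smooth real point $x$, and evaluating the local identity $A^2(f)=\ol{h\circ c}\cdot h\cdot f$ of Lemma~\ref{Mukai-stronglyreal} at $x$ gives $A^2=|h(x)|^2>0$; hence $\epsilon(\FF)=+1$ for every real $\FF$ and every $d$, with no parity hypothesis needed. The substance is the case $r=0$, where $Y_\RR$ consists of its $s$ solitary points and $Y$ carries $t$ conjugate pairs of nodes. Here I would induct on the number of nodes, using a partial normalization $n\colon Y'\ra Y$ at one solitary point or one conjugate pair together with the stratification $\ol{\Pic}^d_\RR(Y)=\ol{\Pic}^d_{\RR,o}(Y)\sqcup\ol{\Pic}^d_{\RR,b}(Y)$. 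Because $n$ is an isomorphism over the dense complement of the normalized node(s) and $A^2$ is a global scalar, one has $\epsilon(\FF)=\epsilon(\text{corresponding sheaf on }Y')$ in every stratum, so $\epsilon$ transports intact to $Y'$.

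On the boundary, Lemma~\ref{rbs} identifies $\ol{\Pic}^d_{\RR,b}(Y)$ with $n_*\ol{\Pic}^{d'}_\RR(Y')$ for $d'=d-1$ (solitary) or $d'=d-2$ (pair); in both cases the genus drops by the same amount, so the parity $d'\equiv g'$ is preserved and induction applies. On the open stratum at a conjugate pair the pullback has unchanged degree $d'=d$ while $g'=g-2$, so again $d'-g'\equiv d-g$ and induction applies. The recursion terminates at the smooth rational curve $Y'=\PP^1$; since $r=0$ this is a conic without real points, where $\OO(m)$ is defined over $\RR$ exactly when $m$ is even, and the hypothesis $d\equiv g$ guarantees that the degree reached there is even, so $\epsilon=+1$ at the base.

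The one branch that resists this bookkeeping—and which I expect to be the main obstacle—is the open stratum at a solitary point, where $\GG=n^*\FF$ keeps degree $d$ while the genus drops by one, flipping the parity of $d-g$. I would treat it not by induction but by a direct local computation at the solitary node. Writing the anti-structure on $\GG$ near the conjugate preimages $p_1,p_2=c'(p_1)$ in local generators, with gluing given by multiplication by $\mu\in\CC^*$, the requirement that it descend to a real structure on $\FF$ (the commuting square of Lemma~\ref{ros}(i)) becomes $\epsilon(\GG)\,|\mu|^2=|\alpha|^2$ for a nonzero $\alpha$; since the right-hand side is positive, reality of $\FF$ \emph{forces} $\epsilon(\GG)=+1$, hence $\epsilon(\FF)=+1$ and $\FF$ is defined over $\RR$, with no appeal to parity. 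Thus the only parity-sensitive contributions are the boundary strata and the conjugate-pair open stratum, for which the parity of $d-g$ survives all the way down to the pointless conic; this is exactly where the hypothesis $d\equiv g\ \mathrm{mod}\ 2$ is consumed.
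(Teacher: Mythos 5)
Your proof is correct, but it follows a genuinely different route from the paper's. The two arguments share the same starting point --- simplicity of $\FF$ makes the sign $\epsilon(\FF)=\mathrm{sign}(A^2)$ well defined, so the task is to rule out $\epsilon=-1$ --- and both move between $Y$ and its partial normalizations via the Type I/II dichotomy. But the paper finishes by a short reduction: the argument of Lemma~\ref{Mukai-stronglyreal} disposes of every sheaf that is locally cyclic at some real point; in the remaining danger zone $Y_\RR=\emptyset$ one has $g=2t$ even, hence $d$ even by the parity hypothesis, and the paper then quotes Gross--Harris \cite[Proposition 2.2]{g-h} for line bundles, reducing the non-locally-free sheaves to that case by writing $\FF=n_*\FF'$ with $\FF'$ a line bundle on a real partial normalization (which preserves the parities of degree and genus). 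You never invoke Gross--Harris: your induction descends all the way to the conic without real points, where ``$\OO(m)$ is real iff $m$ is even'' is elementary, and you neutralize the one stratum whose bookkeeping breaks parity --- Type II at a solitary point --- by the explicit gluing computation showing that descent of an anti-automorphism through the node forces $A^2>0$. That computation is sound, though your displayed relation $\epsilon(\GG)\,|\mu|^2=|\alpha|^2$ is not quite the right equation: writing $A(e_1)=s\,e_2$, $A(e_2)=t\,e_1$ at the two preimages and $g(e_1)=\mu e_2$ for the gluing, the descent condition $A=g\circ A\circ g$ reads $s=|\mu|^2t$, whence $A^2=\bar{s}t=|\mu|^2|t|^2>0$; the conclusion you draw is exactly right, and it is the same positivity mechanism as evaluating Lemma~\ref{Mukai-stronglyreal} at a real point with cyclic stalk. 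What your route buys: it is self-contained (the paper implicitly extends \cite{g-h}, stated for smooth curves, to nodal curves without real points), and it makes explicit where $d\equiv g \pmod 2$ is consumed --- the hypothesis survives every stratum you induct through and is spent only at the base case $\OO(d-g)$ on the pointless conic. What the paper's route buys is brevity.
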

\begin{proof} The sheaves $\FF\in \ol{\Pic}^d_{\RR,o}(Y)$ being simple, 
we can apply the same arguments as in Lemma \ref{Mukai-stronglyreal}.
After that we need only to exclude the case $Y_\RR$ is empty and $c_\FF$ 
is of order 4. But, $Y_\RR=\emptyset$ implies
$d$ and $g$ to be both even. As is well known (cf., \cite[Proposition 2.2 ]{g-h})
in the case of line bundles, if $g$ is even then $c_\FF$ is of order 2. If $\FF$ is of type $II$, 
then we represent it as $\FF= n_*\FF'$ where $\FF'$ is a line bundle on some 
real partial normalization $Y'$ of $Y$, and the same argument applies, 
since the arithmetic genus of $Y'$ has the same parity as that of $Y$.
\end{proof}

\begin{prop}
\label{Euler-picard}
Let $Y$ be a real nodal reduced irreducible rational curve of arithmetic genus $g.$ Then 
$$
e\left(\ol{\Pic}^g_{\RR}(Y)\right)=(-1)^r,
$$
where $r$ is the number of cross-points of $Y_\RR.$
\end{prop}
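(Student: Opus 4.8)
The plan is to compute $e\left(\ol{\Pic}^g_{\RR}(Y)\right)$ by induction on the number $g$ of nodes of $Y$, using the stratification into Type I and Type II sheaves together with Viro additivity of the Euler characteristic. By Lemma \ref{strong-reality}, since $d=g$, we may work with $\ol{\Pic}^g_{\RR}(Y)=(\ol{\Pic}^g_{\RR}(Y))_+$ throughout, so all the tools of Lemmas \ref{rbs} and \ref{ros} apply verbatim. First I would set up the additivity decomposition coming from the boundary stratum: writing $Y'$ for the partial normalization at a chosen node (or pair of conjugate nodes), Lemma \ref{rbs} identifies the closed stratum $\ol{\Pic}^g_{\RR,b}(Y)$ with the image of $n_*$, so that
\begin{equation*}
e\left(\ol{\Pic}^g_{\RR}(Y)\right)=e\left(\ol{\Pic}^g_{\RR,o}(Y)\right)+e\left(\ol{\Pic}^{d'}_{\RR}(Y')\right),
\end{equation*}
where $d'=g-1$ for a real node and $d'=g-2$ for a conjugate pair.

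Next I would evaluate the open part via the fiber bundle of Lemma \ref{ros}. Since the Euler characteristic is multiplicative in locally trivial fiber bundles, $e\left(\ol{\Pic}^g_{\RR,o}(Y)\right)=e(H_{\RR,o})\cdot e\left(\ol{\Pic}^{d'}_{\RR}(Y')\right)$, and Corollary \ref{e-fiber} tells us $e(H_{\RR,o})=-2$ when we normalize a cross point and $e(H_{\RR,o})=0$ when we normalize a solitary point or a conjugate pair. Substituting back, the recursion becomes $e\left(\ol{\Pic}^g_{\RR}(Y)\right)=-e\left(\ol{\Pic}^{g-1}_{\RR}(Y')\right)$ when we normalize a cross point, and $e\left(\ol{\Pic}^g_{\RR}(Y)\right)=e\left(\ol{\Pic}^{d'}_{\RR}(Y')\right)$ when we normalize a solitary point or a pair of conjugate nodes. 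Iterating until all singularities are resolved, the factor $(-1)$ appears exactly once for each of the $r$ cross points and never otherwise, which produces the claimed sign $(-1)^r$.

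To close the induction I would terminate at a smooth rational curve $Y'$ defined over $\RR$: there $\ol{\Pic}^{d'}_{\RR}(Y')=\Pic^{d'}_{\RR}(\PP^1)$ is a single real point (a line bundle $\OO(d')$ is defined over $\RR$ and rigid), so its Euler characteristic is $1$, giving the base case. One must also verify that $\ol{\Pic}^g_{\RR}(Y)$ is nonempty so that the recursion starts from a genuine point rather than collapsing — but since the fully normalized curve contributes $1$ and the cross-point factors are $-2\neq 0$, nonemptiness propagates back up the recursion whenever the smooth fiber is reached through a chain avoiding the vanishing strata, and the solitary/conjugate factors of Euler characteristic $0$ only affect the count through the closed stratum, which is itself handled by the inductive hypothesis.

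The main obstacle I anticipate is bookkeeping the recursion correctly when several types of singularities coexist: each normalization step splits the space into an open part (governed by $e(H_{\RR,o})$) and a closed Type I stratum (a compactified Picard variety of the partially normalized curve with one fewer singularity), and one must check that the parameters $(r,s,t)$ decrease consistently and that the choice of which node to normalize first does not affect the outcome. A second delicate point is ensuring that the multiplicativity of $e$ for the $S^1$-, $\RR^*$-, and $\CC^*$-bundles of Lemma \ref{ros} is legitimate in the real-algebraic setting with compactly supported Euler characteristic; this is exactly what the Viro conventions in the \textbf{Terminological convention} guarantee, since $e(S^1)=e(\CC^*)=0$ while $e(\RR^*)=-2$, so the vanishing factors correctly annihilate the solitary-point and conjugate-pair contributions and isolate the dependence on $r$ alone.
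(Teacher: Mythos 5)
Your strategy is the same as the paper's (stratify by Type I/Type II sheaves at a chosen node, use additivity plus multiplicativity of $e$ on the bundles of Lemma \ref{ros}, and recurse down to the full normalization), but there is a genuine gap at the central step, the cross-point case. Lemma \ref{ros} states that the open stratum fibers over $(\ol{\Pic}^{d}_{\RR}(Y'))_+$ in the \emph{same} degree $d=g$, not over $\ol{\Pic}^{d'}_{\RR}(Y')$ with $d'=g-1$ as you write: it is only the closed stratum $n_*$ that shifts the degree. So the correct additivity reads
$$
e\left(\ol{\Pic}^g_{\RR}(Y)\right)=e\left(\ol{\Pic}^{g-1}_{\RR}(Y')\right)+e(H_{\RR,o})\, e\left(\ol{\Pic}^{g}_{\RR}(Y')\right),
$$
and the two terms involve compactified Picard varieties of \emph{different} degrees, which for real curves are not canonically isomorphic (for a real curve with empty real locus, real Picard varieties in different degrees can even fail to be homeomorphic). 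Your claim $e\left(\ol{\Pic}^g_{\RR,o}(Y)\right)=e(H_{\RR,o})\cdot e\left(\ol{\Pic}^{g-1}_{\RR}(Y')\right)$ therefore needs an argument, and it matters precisely in the only case where the factor is nonzero: the cross point, i.e.\ exactly the case that produces the sign $(-1)^r$. The paper closes this gap by observing that a cross point forces $Y'_\RR$ to contain smooth real points, and tensoring with the ideal sheaf of such a point gives an isomorphism $\ol{\Pic}^{g}_{\RR}(Y')\cong\ol{\Pic}^{g-1}_{\RR}(Y')$; only then does $e\left(\ol{\Pic}^{g-1}_{\RR}(Y')\right)-2e\left(\ol{\Pic}^{g}_{\RR}(Y')\right)$ collapse to $-e\left(\ol{\Pic}^{g-1}_{\RR}(Y')\right)$. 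In the solitary and conjugate-pair cases your misidentification is harmless, since there the fiber factor is $0$.

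A smaller inaccuracy sits in your base case: you assert the recursion ends at $\Pic^{d'}_{\RR}(\PP^1)$, a single point because $\OO(d')$ is defined over $\RR$. But the normalization $\widetilde Y$ may have empty real locus (e.g.\ when $Y_\RR$ consists solely of solitary points or is empty), in which case $\widetilde Y$ is the pointless real conic and $\OO(d')$ descends to $\RR$ only for even $d'$. What saves the argument is degree bookkeeping you never make explicit: since $g=r+s+2t$ for a rational nodal curve, the terminal degree is $g-r-s-2t=0$, and $\Pic^0$ of a genus-zero curve is the single point given by the trivial bundle, defined over $\RR$ regardless of whether $\widetilde Y_\RR$ is empty — which is exactly how the paper phrases the end of its proof.
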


\begin{proof} From Lemma \ref{type1:embed} we have an obvious decomposition,
$$
\ol{\Pic}^g_{\RR}(Y)=\ol{\Pic}^g_{\RR,b}(Y) \coprod \ol{\Pic}^g_{\RR,o}(Y).
$$
Let $p:Y'\ra Y$ denote the partial normalization of either a cross-point, or of a solitary point. 
Using the additivity of the Euler characteristic and Lemmas \ref{rbs} and \ref{ros}, we have
\begin{align*}
e\left(\ol{\Pic}^g_{\RR}(Y)\right)=&e\left(\ol{\Pic}^g_{\RR,b}(Y)\right)+e\left(\ol{\Pic}^g_{\RR,o}(Y)\right)\\=
&e\left(\ol{\Pic}^{g-1}_{\RR}(Y')\right)+e(H_{\RR,o})e\left(\ol{\Pic}^g_{\RR}(Y')\right)
\end{align*}

If $n$ is the partial normalization of a cross-point,  from Corollary \ref{e-fiber} we have 
$$
e\left(\ol{\Pic}^g_{\RR}(Y)\right)=e\left(\ol{\Pic}^{g-1}_{\RR}(Y')\right)-2e\left(\ol{\Pic}^g_{\RR}(Y')\right).
$$
However, if $Y_\RR$ has a cross point, then its smooth locus is non-empty, and so 
the smooth locus of $Y'_\RR$ is non-empty, as well. But, in this case, tensoring 
by the ideal sheaf of a smooth real point induces an isomorphism between 
$\ol{\Pic}^{g}_{\RR}(Y')$ and $\ol{\Pic}^{g-1}_{\RR}(Y').$ 
Therefore $e\left(\ol{\Pic}^g_{\RR}(Y)\right)=-e\left(\ol{\Pic}^{g-1}_{\RR}(Y')\right).$

In the case of a solitary point, from Corollary \ref{e-fiber} we get 
$e\left(\ol{\Pic}^g_{\RR}(Y)\right)=e\left(\ol{\Pic}^{g-1}_{\RR}(Y')\right).$

If $n:Y'\ra Y$ is the partial normalization of a pair of conjugate imaginary nodes, 
similar arguments show that 
$$
e\left(\ol{\Pic}^g_{\RR}(Y)\right)=e\left(\ol{\Pic}^{g-2}_{\RR}(Y')\right),
$$

Iterating the partial normalization procedure until reaching the full normalization 
$p:\widetilde Y\ra Y,$ we get 
$$
e\left(\ol{\Pic}^g_{\RR}(Y)\right)=
(-1)^{r}e\left(\ol{\Pic}^{g-r-s-2t}_{\RR}(\widetilde Y)\right)=
(-1)^{r}e\left({\Pic}^{0}_{\RR}(\widetilde Y)\right).
$$
Since $g(\widetilde Y)=0,$ the Jacobian variety $\Pic^0_\RR(\widetilde Y)$ 
consists of the trivial line bundle only,
 regardless on whether $\widetilde Y_\RR=\emptyset$ or not, and the proof of
 Proposition \ref{Euler-picard} follows.
\end{proof}

\section{Proof of Theorem \ref{main}}

If $g=1,$ then $X$ is equipped with a generic real elliptic pencil $X\to \PP_\CC^1$ and the result 
follows from Viro-Fubini theorem \cite{viro} applied to $X_\RR\to \PP^1_\RR$.

Let $g\ge 2$. According to Chen's theorem \cite{chen}, on a generic complex $K3$ surface of 
Picard rank $1$ all the rational curves in a primitive class are nodal. In our Theorem \ref{main}, 
we consider real $K3$ surfaces of Picard rank $1$ that are generic in Chen's sense\begin{footnote}
{As is known, on any $n$-dimensional complex analytic variety with an anti-holomorphic involution 
each proper complex analytic subset cuts a proper real analytic subset on every $n$-dimensional 
stratum, or connected component, of the real locus. Applying this argument to the moduli, or period, 
spaces of polarized $K3$ surfaces shows that real $K3$ surfaces with N\'eron-Severi group 
isomorphic to $\ZZ$ and generic in Chen's sense do exist in any real deformation class.}\end{footnote}. 
The real $g$-dimensional linear system of curves of genus $g,$  that appears in Theorem \ref{main} 
and which we denote by $\cal C$, is the linear system of curves in the divisor class generating 
the Picard group. Therefore, by Chen's theorem, the real rational curves in this linear system 
are all nodal. Furthermore, they are reduced and irreducible.

Since the number of cross points in the real locus of a real nodal
rational curve has the same parity as the genus of the curve minus 
the number of solitary points, we get Theorem \ref{main} by performing the 
fiber-wise integration over the Euler characteristic \cite{viro}
applied to the natural projection $\ol{\Pic}_\RR^g{\cal C}\to |\cal C|_\RR,$ 
which due to Propositions \ref{nullity} and \ref{Euler-picard} gives us 
$w_g=(-1)^g e(\ol{\Pic}_\RR^g{\cal C}),$ where $w_g$ is the number of 
real rational curves in the linear system $\cal C$ counted with Welschinger sign.
It suffices now to check the following statement.

\begin{prop}
\label{PicardToHilbert}
$\ol{\Pic}_\RR^g{\cal C}$ and $X^{[g]}_\RR$ are diffeomorphic.
\end{prop}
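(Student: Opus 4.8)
The plan is to realise both $\ol{\Pic}_\RR^g\cal C$ and $X_\RR^{[g]}$ as fixed loci of anti-holomorphic involutions on two smooth compact holomorphic symplectic manifolds that are birational over $\RR$, and then to upgrade this real birational equivalence to a diffeomorphism of the real loci by an equivariant deformation argument. For $g=1$ the relative Abel--Jacobi map is already an isomorphism $\ol{\Pic}^1\cal C\cong X$ over $|\cal C|$, so I will concentrate on $g\ge 2$.

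First I would record the complex picture in the language of Mukai. By \cite{mukai} the varieties $\ol{\Pic}^g\cal C$ and $X^{[g]}$ are smooth, projective, irreducible holomorphic symplectic manifolds: they are the moduli spaces of simple (here automatically stable) sheaves on $X$ with primitive Mukai vectors $(0,[\cal C],1)$ and $(1,0,1-g)$. Since $[\cal C]$ generates $\operatorname{NS}(X)=\ZZ$ and $[\cal C]^2=2g-2$, both vectors have square $2g-2$, so the two manifolds have the same dimension $2g$ and the same deformation type. The real structure $\FF\mapsto\ol{c^*\FF}$ acts on each of them, and by Lemma \ref{Mukai-stronglyreal} (when $X_\RR\neq\emptyset$) its fixed locus is exactly $\ol{\Pic}_\RR^g\cal C$, respectively $X_\RR^{[g]}$; in all cases, being the fixed locus of an anti-holomorphic involution on a smooth manifold, each real locus is a smooth closed submanifold of real dimension $2g$.

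Next I would tie the two manifolds together by a single real birational correspondence. Let $\cal H$ be the relative Hilbert scheme of length-$g$ subschemes of the fibres of $\cal C\to|\cal C|$, endowed with the induced real structure. It carries two equivariant projective morphisms: the forgetful map $\alpha\colon\cal H\to X^{[g]}$, $(\cal C_s,Z)\mapsto Z$, and the relative Abel--Jacobi map $\beta\colon\cal H\to\ol{\Pic}^g\cal C$, $(\cal C_s,Z)\mapsto\OO_{\cal C_s}(Z)$. A generic length-$g$ subscheme lies on a unique member of $|\cal C|=\PP^g$ and a generic degree-$g$ sheaf has a one-dimensional space of sections, so $\alpha$ and $\beta$ are both birational; as both constructions commute with $c^*$ and with complex conjugation, the resulting birational map $\psi=\alpha\circ\beta^{-1}\colon\ol{\Pic}^g\cal C\dashrightarrow X^{[g]}$ is defined over $\RR$ and, being a birational map of holomorphic symplectic manifolds, is an isomorphism away from subsets of complex codimension at least two.

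Finally I would pass from the real birational equivalence to a diffeomorphism of real loci. Over $\CC$, birational holomorphic symplectic manifolds are deformation equivalent, and in fact diffeomorphic, by Huybrechts' theorem \cite{huybrechts}: one deforms the complex structure along a twistor-type path to a generic member of the period domain, where $\psi$ becomes biregular. The plan is to run this deformation equivariantly, inside the part of the period domain compatible with the anti-holomorphic involution, so as to produce a smooth proper family $\cal M\to T$ over a connected real base carrying a fibrewise anti-holomorphic involution that restricts to the given ones on the two distinguished fibres; the fixed locus of this involution is then a smooth proper family over $T$, and the equivariant Ehresmann theorem yields the desired diffeomorphism $\ol{\Pic}_\RR^g\cal C\cong X_\RR^{[g]}$. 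I expect the main obstacle to be exactly this equivariant upgrade: Huybrechts' deformation is classical over $\CC$, but here it must be performed within the real deformation class of $X$, so that the two Mukai vectors and the real structure are transported together, and one must check that the anti-holomorphic involution stays smooth and keeps its fixed set of constant topological type along the path---equivalently, that once the underlying complex manifolds are identified the two induced involutions are smoothly conjugate.
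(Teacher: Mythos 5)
Your overall strategy coincides with the paper's: construct an equivariant birational map between $\ol{\Pic}^g\cal C$ and $X^{[g]}$ (yours via the relative Hilbert scheme; the paper via Beauville's map sending $(C,L)$ to the divisor of its unique section --- the same correspondence), then upgrade it to a diffeomorphism of real loci by an equivariant deformation argument. But the proposal stops exactly where the real work begins, and you say so yourself: the ``main obstacle'' is to check that, once the deformed complex manifolds are identified, the two anti-holomorphic involutions are conjugate. That is not a technical verification one can defer: a complex isomorphism $f\colon Z_t\to Z'_t$ produced by Huybrechts' theorem or by global Torelli carries no reason whatsoever to satisfy $f\circ c=c'\circ f$, and without that equivariance you get no map at all between the real loci. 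This is a genuine gap, not an expected-to-work step.

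The paper fills it in two moves, neither of which appears in your proposal. First, the birational map induces (Huybrechts, Lemma 2.6) a Hodge isometry $H^2(\ol{\Pic}^g{\cal C};\ZZ)\to H^2(X^{[g]};\ZZ)$ commuting with conjugation, hence an equivariant identification of the local universal deformation spaces; inside the fixed locus of conjugation one picks a generic real curve germ $\{s_t\}$ along which the deformed manifolds $Z_t$, $Z'_t$ have Picard number $0$, and global Torelli then yields a complex isomorphism $Z_t\cong Z'_t$ for non-exceptional $t\ne 0$. Second --- and this is the heart of the proof --- one shows that a hyperk\"ahler manifold with Picard number $0$ and odd second Betti number (here $b_2=23$) has no nontrivial automorphisms. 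Granting this, equivariance is automatic: $f^{-1}\circ c'\circ f\circ c$ is a holomorphic automorphism of $Z_t$, hence the identity, so $f$ commutes with the real structures and restricts to a diffeomorphism $(Z_t)_\RR\cong(Z'_t)_\RR$; since the real loci along the real germ form smooth proper families, this gives $\ol{\Pic}^g_\RR{\cal C}\cong X^{[g]}_\RR$. The automorphism-freeness itself requires a Nikulin-style arithmetic argument: if $g^*\omega=\omega$ the Bogomolov--Beauville--Fujiki pairing forces the action on $H^2(Z,\ZZ)$ to be trivial, hence $g=\mathrm{id}$; a finite-order $g$ with $(g^k)^*\omega\ne\omega$ would force the rank-$23$ lattice $H^2(Z,\ZZ)$ to decompose into representations of even rank $\phi(n)$, a contradiction; infinite order is excluded by Oguiso's results. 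None of this can be absorbed into an ``equivariant upgrade'' of Huybrechts' deformation performed by fiat; it is where the proposition is actually proved, and it is missing from your argument.
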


\begin{proof} Start from considering the natural birational map 
$\phi: \ol{\Pic}^g{\cal C} \to X^{[g]}$, see \cite[Proposition 1.3]{beauville}.
Recall that on the  open subset of $\ol\Pic ^g{\cal C},$ consisting of pairs $(C,L),$
where $C\in \cal C$ is  smooth, $L$ is invertible, and 
$\dim H^0(C,L)=1$, this map sends such a pair to the unique 
effective degree $g$ divisor $D$ on $C$ with $L=\cal O_C(D)$, where
then $D$ is considered as a length-$g$ subscheme of $X$. 
As is immediate from such a description, this map is equivariant with respect
to complex conjugation, that is $\phi\circ c_{\ol{\Pic}^g}=c_{X^{[g]}}\circ\phi$ 
at each point where $\phi$ is well defined, and 
$c_{\ol{\Pic}^g}\circ\phi^{-1}=\phi^{-1}\circ c_{X^{[g]}}$ 
at each point where $\phi^{-1}$ is well defined.

As is known (see, for example Lemma 2.6 \cite{huybrechts}), 
the birational map $\phi$ induces an isomorphism 
$H^2(\ol{\Pic}^g{\cal C} ;\ZZ) \to H^2( X^{[g]};\ZZ) $ 
which is compatible with the Hodge structures and 
Beauville-Bogomolov quadratic forms. In addition, according to above, 
it commutes with the action of the complex conjugation.
Such an isomorphism $H^2(\ol{\Pic}^g{\cal C} ;\ZZ) \to H^2( X^{[g]};\ZZ) $  
provides an equivariant isomorphism between the local universal deformation 
spaces of $Z=\ol{\Pic}^g{\cal C}$ and $Z'=X^{[g]}.$ The latter spaces are smooth
and equipped with a standard action of the complex conjugation 
(up to an appropriate choice of local coordinates),  the fixed points of this action 
form the local universal spaces of equivariant deformations for $(Z,c)$ and $(Z',c)$,
and the above isomorphism being equivariant with respect to complex conjugation 
induces an isomorphism between these spaces of equivariant deformations.

In thus identified local universal spaces of equivariant deformations, 
pick a smooth real curve germ $S=\{s_t\}_{t\in(\RR,0)}$ that corresponds to a 
such generic equivariant deformation of $(Z,c)$ and $(Z',c')$ that the 
Picard number of $Z_t$ and $Z'_t$ becomes $0$ for all, except countably many, $t\ne 0$.
Then, $Z_t$ and $Z'_t$ with non exceptional values of $t\ne 0$ are isomorphic over 
$\CC$ (as it follows, for example, from \cite[Theorems 1.3 and 5.1]{huy-bourbaki}). 
This isomorphism provides a diffeomorphism between the real loci 
of $Z_t$ and $Z'_t$ as soon as it is commuting with the real structures. 
Clearly, such a commutativity holds, if  $Z_t$, or $Z'_t,$ has no nontrivial automorphisms.

So, to finish the proof it is sufficient to show that any hyperk\"ahler manifold $Z$ 
with odd second Betti number (in our case, it is $23$) and Picard number $0$ 
has no any nontrivial automorphism. For that, we adapt the arithmetical arguments 
traditional for proving similar statements in the case of $K3$ surfaces 
(see, for example, \cite{Nikulin} and \cite[Appendix D]{DIK}).

Let $\omega$ be a holomorphic $2-$form generating $H^0(Z,\Omega^2).$
First, if  $g^*\omega=\omega$, then since Bogomolov-Beauville-Fujiki product with 
$\omega$ embeds $H^2(Z,\ZZ)$ into $\CC$ under assumption that the Picard number is 0,
the action of $g$ on $H^2(Z,\ZZ)$ is trivial. Thus, $g$ is the identity {\it cf.}, \cite[Remark 4.2]{oguiso-2}.
Second, once more using the Bogomolov-Beauville-Fujiki pairing embedding in a similar fashion, 
we observe that if $(g^k)^*\omega\ne \omega$, then $(g^k)^*$ has no nonzero fixed element in 
$H^2(Z,\ZZ)$. Hence, $g^*,$ and therefore $g,$  can not be of finite order;  indeed, otherwise, 
according to the latter observation, either $g^*=-1$, which contradicts to preservation of the 
K\"ahler cone, or the free abelian group $H^2(Z,\ZZ)$ will split in some number of irreducible 
presentations of rank $\phi(n)$ of a cyclic group $\ZZ/n$ with $n>2$ 
(here, $\phi$ stands for Euler's totient function; as is known, $\phi(n)$ is the maximal possible rank), 
which is impossible since $\rank H^2(Z,\ZZ)=23$ and $\phi(n)$ is even for any integer $n>2.$ 
As a conclusion, the automorphism group is a free abelian group. To show that its rank is $0,$ 
we can either adapt D.2.9 from \cite{DIK}  (just taking into account the change of parity of $b_2$ 
with respect to the case of $K3$ surfaces) or apply \cite[Corollary 1.6]{oguiso-1}.
\end{proof}

\section{Proof of Theorem \ref{realG}}
\label{sec:real.G}

\subsection{Local punctual Hilbert scheme} The complex conjugation acts naturally on 
the local punctual Hilbert schemes $\hilb^n_{\CC}(0)$ of $\CC^2$ with support at $0$ 
(we abbreviated the usual notation $\hilb^n(\CC^2,0)$ for shortness). This action preserves
the cellular decomposition of $\hilb_{\CC}^n(0)$ constructed by Ellingsrud-Str\o mme \cite{e-s}.
More precisely, this cellular decomposition is based on Bia\l yni\-cki-Birula one and is associated 
with a generic one-parameter subgroup of the torus action on $\hilb_{\CC}^n(0)$ induced by 
the standard coordinate-wise torus action on $\CC^2$; this toric action on $\hilb_{\CC}^n(0)$ 
has a finite number of fixed points, all being real, and it commutes with the complex conjugation.
Therefore: each cell of this decomposition is isomorphic to a complex affine space $\CC^k$, $k\in\NN;$ 
it is invariant under complex conjugation; and its real part, which is isomorphic respectively to a 
real affine space $\RR^k,$ provides a real cell of a cellular decomposition of the real part 
$\hilb^n_{\RR} (0)$ of $\hilb_{\CC}^n(0).$ As a consequence, we can easily use Lemma 2.9 
of G\"ottsche \cite{gottsche} or Theorem 3.3.3 of Cheah \cite{cheah-cells} to find the generating series 
for the Euler characteristic of $\hilb^n_\RR (0).$

\begin{prop}
\label{slava's-conj-proof}
We have
$$
\sum_{n\geq 0} e(\hilb^n_\RR (0))t^n=\prod_{k\geq 1}\left( \frac {1}{1+(-x)^k}\right).
$$
\end{prop}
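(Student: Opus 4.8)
The plan is to carry the whole computation over to the complex side by means of the cell decomposition recalled just above. First I would record that $\hilb^n_\RR(0)$ inherits a cell decomposition whose cells are the real parts $\RR^{k}$ of the conjugation-invariant complex cells $\CC^{k}$ of the Ellingsrud--Str\o mme (Bia\l ynicki--Birula) decomposition of $\hilb^n_\CC(0)$. Working throughout with the compactly supported Euler characteristic, which is additive over such a stratification, and using $e(\RR^{k})=(-1)^{k}$, I get
\begin{equation*}
e(\hilb^n_\RR(0))=\sum_{k\ge 0}(-1)^{k}\,\nu_{n,k},
\end{equation*}
where $\nu_{n,k}$ denotes the number of complex cells of complex dimension $k$.

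Next I would identify this alternating sum as a specialization of the Poincar\'e polynomial of $\hilb^n_\CC(0)$. As all the cells are of even real dimension, there is no odd cohomology and $\nu_{n,k}=b_{2k}(\hilb^n_\CC(0))$; writing $P_n(s)=\sum_{k}b_{2k}(\hilb^n_\CC(0))\,s^{k}$ for the Poincar\'e polynomial in the variable $s$ recording complex dimension, the sum above is exactly $P_n(-1)$. The generating series $\sum_n P_n(s)\,t^{n}$ for the punctual Hilbert scheme is precisely what Lemma~2.9 of G\"ottsche \cite{gottsche} (or Theorem~3.3.3 of Cheah \cite{cheah-cells}) supplies, namely
\begin{equation*}
\sum_{n\ge 0}P_n(s)\,t^{n}=\prod_{m\ge 1}\frac{1}{1-s^{\,m-1}t^{m}}.
\end{equation*}

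Finally I would set $s=-1$. By the two previous steps the coefficient of $t^{n}$ on the left turns into $e(\hilb^n_\RR(0))$, giving
\begin{equation*}
\sum_{n\ge 0}e(\hilb^n_\RR(0))\,t^{n}=\prod_{m\ge 1}\frac{1}{1-(-1)^{\,m-1}t^{m}}.
\end{equation*}
It then remains only to rearrange the product: its factors with $m$ odd equal $(1-t^{m})^{-1}$ and those with $m$ even equal $(1+t^{m})^{-1}$, which agrees term by term with $\prod_{k\ge 1}(1+(-t)^{k})^{-1}$, since $(-t)^{k}=-t^{k}$ for odd $k$ and $(-t)^{k}=t^{k}$ for even $k$. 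This is the asserted identity, with $t$ playing the role of the variable written $x$ on the right-hand side of the statement.

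I do not expect a deep obstacle here once the cell decomposition is taken for granted; the two steps that need care are the passage from the real cells to the $s=-1$ specialization of the complex Poincar\'e polynomial, where the sign $e(\RR^{k})=(-1)^{k}$ is decisive, and the parity rearrangement of the infinite product. A low-order check reassures the normalization: for $n=2$ one has $\hilb^2_\CC(0)=\PP^1$ with real cells $\RR^{0}\sqcup\RR^{1}$, so $e(\hilb^2_\RR(0))=1-1=0$, matching the vanishing coefficient of $t^{2}$ on the right.
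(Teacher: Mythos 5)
Your proof is correct and follows essentially the same route as the paper: both rely on the conjugation-invariant Ellingsrud--Str\o mme cell decomposition, the G\"ottsche/Cheah generating series for the cell counts, and the substitution of $-1$ into the variable recording (complex) cell dimension. The only cosmetic difference is that you phrase the specialization as evaluating the Poincar\'e polynomial $P_n(s)$ of $\hilb^n_\CC(0)$ at $s=-1$, whereas the paper sets $y=-1$ in its two-variable series $B(x,y)=\prod_{k\geq 1}\left(1-x^k y^{k-1}\right)^{-1}$ of real cell counts --- the same manipulation.
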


\begin{proof}
Let $p(a,b)$ denote the number of partitions of $a$ whose largest part is $b.$ The arguments of 
\cite{e-s} show that the number of $2i-$cells of $\hilb_{\CC}^n(0)$ is $p(n,n-i),$ and so the number 
of $i-$cells of  $\hilb^n_\RR(0)$ is $p(n,n-i).$ As in the complex case, applying Euler's generating series
(cf., Lemma 2.9 in \cite{gottsche} and Theorem 3.3.3 in \cite{cheah-cells}) we get
\begin{align*}
B(x,y)=&~\sum_{n\geq 0}\sum_{i\geq 0} b_i \left(\hilb^n_\RR(0) \right)y^ix^n=
\sum_{n\geq 0}\sum_{i\geq 0} p(n,n-i) y^ix^n\\ \notag
=&~\prod_{k\geq 1}\left( \frac {1}{1-x^ky^{k-1}}\right).
\end{align*}
As a consequence, the generating series for the Euler characteristic of the real local punctual Hilbert scheme 
$\hilb^n_\RR (0)$ is 
$$
B(x,-1)=\sum_{n\geq 1} e(\hilb^n_\RR(0))x^n=\prod_{k\geq 1}\left( \frac {1}{1+(-x)^k}\right).
$$
\end{proof}

\subsection{Proof of Theorem \ref{realG}}

An anti-holomorphic involution $c$ on a complex surface $X$ induces, in a natural 
component-wise way, an involution on the symmetric product $S^n(X)$. 
The fixed point locus of the latter, which we denote (as it would be natural in the case of $X$ 
being the complexification of an algebraic surface defined over the reals) by $S^n_\RR(X),$ is
distinct from $S^n(X_\RR),$ where $X_\RR$ is the fixed locus of $c.$ Instead, for any $n\geq 1,$ 
the following evident decomposition holds: 
\begin{equation}
\label{sym-conj}
S_\RR^n(X)
=\coprod_{a+2b=n} S^a(X_\RR)\times S^b(X'_\CC), \quad X'_\CC= 
(X \setminus X_\RR)/{conj}.
\end{equation}

This decomposition lifts to the following decomposition of the real loci of 
Hilbert schemes that respects the Hilbert-Chow map:
$$
X^{[n]}_\RR=
\sum_{\substack{\alpha, \beta : \\ \vert\alpha\vert +2\vert\beta\vert=n}} \HH^\alpha(X_\RR)\times \HH^\beta(X'_\CC),
$$
$$
\HH^\alpha(X_\RR)= S^{\alpha}_0(X_\RR)\ltimes \prod_i (\hilb_\RR^{i}(0))^{\alpha_i},
\HH^\beta(X'_\CC)= S^{\beta}_0(X'_\CC)\ltimes \prod_j(\hilb_\CC^{j}(0))^{\beta_j}.
$$
where $\alpha=(\alpha_1,\dots )$ and $\beta=(\beta_1, \dots)$
denote finite sequences of positive integers, $\vert \cdot\vert$ denotes the $L_1$-norm, 
and $ B \ltimes F$ stands for fiber bundles with base $B$ and fiber $F.$
The symbol  $S_0^\gamma (V)$ with $ \gamma=(\gamma_1,\dots, \gamma_l)$ 
denotes the main stratum of a $\gamma$-symmetric power of a space $V:$
$$
S_0^\gamma(V)=
(V^{\vert\gamma\vert} \setminus \Delta)/(\Sigma_{\gamma_1}\times\dots\times\Sigma_{\gamma_l}).
$$
Here, $\Delta =\{x\in V^{\vert\gamma\vert} : x_i\ne x_j \, \text{for}\, i\ne j\}$ and 
$\Sigma_{\gamma_i},~i=1,\dots, l,$ are  the symmetric groups over $\gamma_i$ elements 
that act in $V^{\vert\gamma\vert}$ by permuting the corresponding coordinates. 

The fiber bundle structure in the above decomposition implies that the generating function
for the numerical sequence $e(X^{[n]}_\RR)$ naturally splits in a product of two generating 
functions, one with terms coming from 
$\HH^{\alpha} (X_\RR)$,
$$
f_\RR=\sum e(\HH^{\alpha}(X_\RR))q^{I(\alpha)}, \quad I(\alpha)=\sum i\alpha_i,
$$
and one with terms coming from $\HH^\beta(X'_\CC),$
$$
f_\CC=\sum e(\HH^{\beta}(X'_\CC))q^{I(\beta)}, \quad I(\beta)=\sum j\beta_j.
$$
The structure of the latter one is exactly the same as in the motivic formula for the punctual Hilbert 
schemes of complex surfaces (see, for example, \cite{cataldo} or \cite{gottsche-motive}), 
so the standard proofs apply and give us the formula
$$
f_\CC= \prod_{s\geq 1} \frac{1}{(1-q^{2s})^{\frac{e_\CC-e_{\RR}}2}}.
$$
As to $f_\RR,$ it gets the following expression 
$$
f_\RR=\sum_{l,\alpha=(\alpha_1,\dots,\alpha_l)}
e(S^\alpha_0(X_\RR))e(1)^{\alpha_1}\dots e(l)^{\alpha_l}q^{I(\alpha)},
$$
where
$
 e(S^\alpha_0(X_\RR))=\frac{e_{\RR}(e_{\RR}-1)\dots
(e_\RR-\vert\alpha\vert+1)}
{ \alpha_1 !\dots \alpha_l!}
$
and 
$e(k)=e(\hilb_\RR^{k}(0)).$
According to Proposition \ref{slava's-conj-proof},
$$
\sum e(k)q^k=\prod_{k\geq 1}\left( \frac {1}{1+(-q)^k}\right).
$$
Hence (compare the proof of Theorem 2.1 in \cite{cataldo}),
$$
f_\RR = \prod_{r\geq 1} \frac{1}{\left(1+(-q)^r\right)^{e_{\RR}}},
$$
and the formula (\ref{real-gottsche}) follows.
\qed

\ps

From Theorems \ref{main} and \ref{realG}, the first part of Corollary \ref{ryz} follows 
immediately,  while the second part follows by substituting $-q$ for $q.$

\begin{rmk}
For comparison with the punctual Hilbert schemes, let us point that in the case of 
the symmetric products the generating function has the following shape:
$$
\displaystyle \sum_{n\geq 0} e(S^n_\RR(X))t^n
= \frac {1}{(1-t)^{e_{\RR}}(1-t^2)^{(e_\CC-e_{\RR})/2}},
$$
as it can be easily seen from (\ref{sym-conj}) and Macdonald's formula \cite{macdonald}.
\end{rmk}

\section{Examples}
\label{ER}

\subsection{A few special cases} 

Here is the first values of $e(X^{[g]}_\RR)$ encoded in formula (\ref{real-gottsche}):

\begin{align*}
e(X^{[1]}_ \RR )=&e_{\RR}\\
e(X^{[2]}_ \RR ))=&\frac12(e_{\RR}^2+e_\CC)-e_{\RR}\\
e(X^{[3]}_\RR)=&\frac 16 e_\RR(e_\RR-1)(e_\RR-2)+  \frac12( e_\CC e_\RR-e_\RR^2+2e_\RR)\\
e(X^{[4]}_\RR)=&\frac1{24}e_{\RR}(e_{\RR}-1)(e_{\RR}-2)(e_{\RR}-3) \\ \notag
+ &\frac1{8}(e_\CC^2 +2e_\CC e_{\RR}^2-4e_\CC e_{\RR}-2e_{\RR}^3+6e_\CC+11e_{\RR}^2-6e_{\RR})
\end{align*}

\subsection{Sharpness} Applying the above formulas to $K3$ surfaces with an elliptic pencil 
(case $g=1$), to curves of degree $6$ in the projective plane (case $g=2$) and surfaces of 
degree $4$ in the projective $3-$space (case $g=3$), we deduce from Theorem \ref{main} that:
\begin{itemize}
\item[ (1)] {\it for any generic real elliptic $K3$ surface, the elliptic pencil contains at least 
$\vert e(X_\RR)\vert$ real rational curves,}
\item[ (2)]
{\it any generic real nonsingular plane curve of degree $6$ has at least $12$ real double tangents
(including the real tangents at a pair of imaginary complex conjugate points) and if its real locus 
consists of $10$ ovals placed outside each other, then there are at least $192$ real double tangents} 
(recall that $10$ is the maximal possible number of ovals when they are placed outside each other), 
\item[ (3)]
{\it any generic real nonsingular surface of degree 4 in the projective $3-$space of Harnack type 
{\rm (that is two-component surfaces with one component  of genus $0$ and one of genus $10$)}  
has at least $1152$ real triple tangent planes, and those whose real locus 
is connected and of genus $10$ {\rm (thus, of Euler characteristic $-18$)} have at least $1536$ 
real triple tangent planes} (including the real planes tangent at a real point and a pair of imaginary 
complex conjugate points; $-18$ is the most negative value for the Euler characteristic of 
real quartic surfaces, and $10$ is the maximal value for the genus). 
\end{itemize}

The sharpness of the first two bounds is easy to establish by elementary means (in particular, 
the existence of real elliptic $K3$ surfaces with connected real locus of Euler characteristic 
$-18$ and $18$ real singular fibers implies the sharpness of the first bound in the case 
$g=1, e(X_\RR)=-18$); for the third bound in what concerns Harnack surfaces it can be 
established by means of tropical geometry, as was kindly communicated to us by Grigory Mikhalkin.

Since the real rational curves without real points give positive inputs into our count, 
the bound given in Theorem \ref{main} is sharp also for all real $K3$ surfaces with the empty real locus.

\subsection*{Acknowledgements} The second author acknowledges the support of the 
Simons Foundation's "Collaboration Grant for Mathematicians" 
and the hospitality of the IH\'ES and IRMA Strasbourg, where this project was launched. 
The first author acknowledges the hospitality of the MPIM Bonn, where further advances
were achieved. Our thanks to Dropbox, which made our work on distance comfortable.

\bibliographystyle{alpha} 

\end{document}